\documentclass[11pt, a4paper]{article}

\usepackage[english]{babel}
\usepackage[a4paper]{geometry}
\usepackage{amsmath,amssymb}
\usepackage{amsfonts}
\usepackage{amsthm}
\usepackage{graphicx}
\usepackage[utf8]{inputenc}
\usepackage{mathtools}
\usepackage{paralist}
\usepackage{booktabs}

\usepackage{color}
\usepackage{tikz}
\usetikzlibrary{matrix}

\numberwithin{equation}{section}

\theoremstyle{plain}
\newtheorem{theorem}{Theorem}[section]

\newtheorem{lemma}[theorem]{Lemma}
\newtheorem{corollary}[theorem]{Corollary}

\theoremstyle{definition}

\newtheorem{remark}[theorem]{Remark}
\newtheorem{example}[theorem]{Example}
\newtheorem{algorithm}[theorem]{Algorithm}


\newcommand{\C}{\mathbb{C}}
\newcommand{\R}{\mathbb{R}}


\newcommand{\bH}{\mathbb{H}}


\newcommand{\cO}{\mathcal{O}}

\newcommand{\cR}{\mathcal{R}}






\newcommand{\coloneq}{\mathrel{\mathop:}=}
\newcommand{\eqcolon}{=\mathrel{\mathop:}}


\newcommand{\eps}{\varepsilon}

\newcommand{\conj}[1]{\overline{#1}}
\newcommand{\comp}[1]{#1^{\operatorname{c}}}

\newcommand{\dd}{{\operatorname{d}}}
\newcommand{\ee}{{\operatorname{e}}}
\newcommand{\ii}{{\operatorname{i}}}

\DeclarePairedDelimiter{\abs}{\lvert}{\rvert}

\DeclarePairedDelimiter{\cc}{[}{]}
\DeclarePairedDelimiter{\oc}{]}{]}
\DeclarePairedDelimiter{\co}{[}{[}
\DeclarePairedDelimiter{\oo}{]}{[}

\DeclareMathOperator{\re}{Re}
\DeclareMathOperator{\im}{Im}

\DeclareMathOperator{\capacity}{cap}

\DeclareMathOperator{\wind}{wind}

\renewcommand{\labelenumi}{\textup{(\roman{enumi})}}

\title{Walsh's Conformal Map onto Lemniscatic Domains for Several Intervals}
\author{Klaus Schiefermayr\footnotemark[1] \and Olivier 
S\`{e}te\footnotemark[2]}
\date{February 11, 2024}

\begin{document}
\maketitle

\renewcommand{\thefootnote}{\fnsymbol{footnote}}

\footnotetext[1]{University of Applied Sciences Upper Austria, Campus Wels, 
Austria, \\ \texttt{klaus.schiefermayr@fh-wels.at}}

\footnotetext[2]{Berlin, Germany.
\texttt{sete@math.tu-berlin.de}, ORCID: 0000-0003-3107-3053}

\renewcommand{\thefootnote}{\arabic{footnote}}

\begin{abstract}
We consider Walsh's conformal map from the complement of a compact set
$E = \cup_{j=1}^\ell E_j$ with $\ell$ components onto a lemniscatic domain 
$\widehat{\C} \setminus L$, where $L$ has the form
$L = \{ w \in \C : \prod_{j=1}^\ell \abs{w - a_j}^{m_j} \leq \capacity(E) \}$.
We prove that the exponents $m_j$ appearing in $L$ satisfy $m_j = \mu_E(E_j)$,
where $\mu_E$ is the equilibrium measure of $E$.
When $E$ is the union of $\ell$ real intervals,
we derive a fast algorithm for computing the centers $a_1, \ldots, a_\ell$.
For $\ell = 2$, the formulas for $m_1, m_2$ and $a_1, a_2$ are explicit.
Moreover, we obtain the conformal map numerically.
Our approach relies on the real and complex Green's functions of $\widehat{\C} \setminus E$ and $\widehat{\C} \setminus L$.
\end{abstract}

\paragraph*{Keywords:}
Conformal map, Lemniscatic domain, Multiply connected domain,
Several intervals, Green's function, Logarithmic capacity,
Equilibrium measure.

\paragraph*{AMS Subject Classification (2020):}
30C35; 
30C20; 
30C85; 

\section{Introduction}

When dealing with approximation problems on the interval $[-1,1]$,
it is often convenient to formulate the problem on the unit circle by
using the well-known Joukowsky map $z\mapsto{z}+\sqrt{z^2-1}$,
which maps the complement of the interval $[-1,1]$ onto the complement
of the unit disk, or its inverse $z\mapsto\frac{1}{2}(z+z^{-1})$.
The mapping $z\mapsto{z}+\sqrt{z^2-1}$ is an example of the famous 
Riemann mapping theorem (setting $E:=[-1,1]$) which states that for each
simply connected compact set~$E$ there exists a unique (if suitably 
normalized at $z=\infty$) conformal map from the complement of~$E$ to the 
complement of the unit disk.  J.\,L.\,Walsh~\cite{Walsh1956} found a
rather canonical generalization for the case when $E$ is the union of 
$\ell$~simply connected compact sets. The corresponding conformal 
map~$\Phi$ maps the complement of $E$ onto a so-called lemniscatic 
domain, which is a generalization of a classical lemniscate. 
More precisely, Walsh's theorem reads as follows.


\begin{theorem}[{\cite[Sect.~III]{Walsh1956}}] \label{thm:walsh_map}
Let $E_1, \ldots, E_\ell \subseteq \C$ be disjoint simply connected, infinite compact sets and let
\begin{equation} \label{eqn:E}
E = \bigcup_{j=1}^\ell E_j,
\end{equation}
that is, $\comp{E} \coloneq \widehat{\C} \setminus E$ is an $\ell$-connected domain. Then there exists a unique compact set~$L$ of the form
\begin{equation} \label{eqn:lemniscatic_domain}
L \coloneq \biggl\{ w \in \C : \prod_{j=1}^\ell \abs{w-a_j}^{m_j} \leq \capacity(E) \biggr\},
\end{equation}
where $a_1,\ldots,a_\ell\in\C$ are pairwise distinct and
$m_1, \ldots, m_\ell > 0$ are real with $\sum_{j=1}^\ell m_j = 1$,
and a unique conformal map
\begin{equation}\label{eqn:Phi}
\Phi : \comp{E} \to \comp{L} \quad \text{with} \quad
\Phi(z) = z + \cO(1/z) \quad \text{at } \infty.
\end{equation}
If $E$ is bounded by Jordan curves, then $\Phi$ extends to a homeomorphism from  $\overline{\comp{E}}$ to $\overline{\comp{L}}$.
\end{theorem}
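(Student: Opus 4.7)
The plan is to construct $\Phi$ by matching Green's functions of $\comp{E}$ and $\comp{L}$. Let $g_E$ denote the Green's function of $\comp{E}$ with pole at $\infty$, and let $G_E = g_E + i\tilde g_E$ be its (locally analytic, globally multi-valued) complex counterpart, so that $G_E(z) = \log z - \log\capacity(E) + \cO(1/z)$ as $z \to \infty$. For a cycle $\gamma_j \subset \comp{E}$ that encircles $E_j$ once positively and no other $E_k$, the classical potential-theoretic identity
\[
\oint_{\gamma_j} dG_E = 2\pi i\, \mu_E(E_j)
\]
links the period of $\tilde g_E$ to the equilibrium mass of $E_j$. On the target side, $L$ is a sub-level set of $\prod_j \abs{w-a_j}^{m_j}$, so its Green's function with pole at infinity is
\[
g_L(w) = \sum_{j=1}^\ell m_j \log\abs{w - a_j} - \log\capacity(E),
\]
with complex analog $G_L(w) = \sum_j m_j \log(w - a_j) - \log\capacity(E)$, whose period around a loop encircling the $j$-th component of $L$ equals $2\pi i\, m_j$.

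If a pair $(\Phi, L)$ exists, then $g_L \circ \Phi$ and $g_E$ share the same logarithmic singularity at $\infty$ and the same vanishing boundary values, hence agree; consequently $G_L \circ \Phi = G_E$ up to branch choices, which exponentiates to the master identity
\[
\prod_{j=1}^\ell \bigl(\Phi(z) - a_j\bigr)^{m_j} = \capacity(E)\, \exp\bigl(G_E(z)\bigr).
\]
Taking periods around $\gamma_j$ and using that $\Phi(\gamma_j)$ winds once around $a_j$ and not around the other $a_k$ immediately forces $m_j = \mu_E(E_j)$; the normalization $\sum_j m_j = 1$ is then automatic since $\mu_E$ is a probability measure. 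Uniqueness follows: given two solutions $(\Phi_i, L_i)$, the composition $\Phi_2 \circ \Phi_1^{-1} : \comp{L_1} \to \comp{L_2}$ is conformal with expansion $w + \cO(1/w)$ at infinity and pulls $G_{L_2}$ back to $G_{L_1}$; since both lemniscates share the same exponents, this rigidity collapses the composition to the identity, so $L_1 = L_2$ and $\Phi_1 = \Phi_2$.

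For existence, we would set $m_j \coloneq \mu_E(E_j)$ and look for $a_1, \ldots, a_\ell \in \C$ together with an analytic $\Phi$ on $\comp{E}$ realizing the master identity. Logarithmic differentiation recasts the problem as decomposing the meromorphic differential $dG_E$ on $\comp{E}$ as a weighted sum $\sum_j m_j\, d\log(\Phi(z) - a_j)$. With the $m_j$ already determined, the unknowns are $(a_1,\ldots,a_\ell)$ and $\Phi$; demanding that each individual differential $d\log(\Phi - a_j)$ be a \emph{single-valued} logarithmic differential on $\comp{E}$ with period $2\pi i$ around $\gamma_j$ and zero around all other cycles imposes $\ell-1$ independent nonlinear equations on the $a_j$, while the normalization $\Phi(z) = z + \cO(1/z)$ at infinity fixes the remaining degree of freedom (essentially $\sum_j m_j a_j$).

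The main obstacle is the solvability of this nonlinear period system, combined with showing that the resulting $\Phi$ is univalent on $\comp{E}$. A natural approach is a continuation/degree argument: start from a degenerate reference configuration where $\Phi$ and the $a_j$ are explicit---for instance, when $E$ is the union of $\ell$ small, well-separated disks, $\Phi$ is a small perturbation of the identity and each $a_j$ is close to the center of $E_j$---and then deform $E$ continuously to the target family, using the implicit function theorem or a Leray--Schauder degree argument to propagate a solution. Univalence of $\Phi$ along the deformation is monitored by the argument principle applied to the cycles $\gamma_j$, each of which must image onto a simple positively oriented loop around the corresponding $a_j$; collision of the $a_j$ or loss of injectivity of $\Phi$ would have to be ruled out as one deforms. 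Finally, when $\partial E$ consists of Jordan curves, the extension of $\Phi$ to a homeomorphism of $\overline{\comp{E}}$ onto $\overline{\comp{L}}$ follows from Carath\'eodory's theorem applied componentwise.
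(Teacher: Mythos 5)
First, a point of reference: the paper does not prove this statement at all --- it is Walsh's 1956 theorem, quoted with a citation, and the authors point to the existence proofs of Walsh, Grunsky, Jenkins and Landau. So your attempt must stand on its own, and as it stands it is an outline with a genuine gap at its core. The necessary conditions you derive are fine (and in fact the period computation forcing $m_j = \mu_E(E_j)$ is essentially the paper's Theorem~2.1, proved there \emph{assuming} Walsh's theorem), but the existence of the pair $(a_1,\ldots,a_\ell,\Phi)$ is exactly the hard content of the theorem, and your treatment of it is a proposed strategy rather than an argument. You reduce existence to solving a nonlinear period system together with univalence of $\Phi$, and then say ``a natural approach is a continuation/degree argument'' in which collisions of the $a_j$ and loss of injectivity ``would have to be ruled out.'' Nothing rules them out. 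A continuation argument needs (a) a path in a suitable moduli space of configurations $E$ joining the reference configuration of small separated disks to an arbitrary $E$ as in the theorem, along which the relevant data ($g_E$, its periods, $\capacity(E)$) vary continuously; (b) openness, i.e.\ an implicit-function-theorem step requiring invertibility of the linearized period map at a solution, which you do not examine; and (c) closedness, i.e.\ a priori bounds keeping the $a_j$ in a compact set, separated from each other, and keeping $\Phi$ univalent in the limit. None of these is addressed, and (c) in particular is where the real work lies (the classical proofs get compactness from normal families of univalent functions or from extremal/continuity methods). Without these steps the existence claim is not established.

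Two smaller points. In the uniqueness paragraph, the assertion that a conformal map $\Phi_2\circ\Phi_1^{-1}:\comp{L_1}\to\comp{L_2}$ normalized as $w+\cO(1/w)$ and intertwining the Green's functions ``collapses to the identity'' is itself a lemma that needs proof (it is, in effect, the uniqueness half of Walsh's theorem applied to lemniscatic domains); you should either prove it --- e.g.\ by comparing the two explicit expressions for $g_{L_1}$ obtained from $g_{L_2}\circ h = g_{L_1}$ and analyzing the singularities of the harmonic extension, or by an argument-principle count of the preimages of the $a_j^{(2)}$ --- or cite it, but not assert it. Second, in the period step you use that $\Phi(\gamma_j)$ winds once around $a_j$ and around no other $a_k$; this requires knowing that each connected component of $L$ contains exactly one of the points $a_k$ and that $\Phi$ pairs the boundary components of $\comp{E}$ and $\comp{L}$ bijectively. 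Both are true and not hard (each component of $L$ contains at least one $a_k$ since $g_L\to-\infty$ only at the $a_k$, and $\comp{L}$ is $\ell$-connected, so pigeonhole applies), but they should be said, since the identification $m_j=\mu_E(E_j)$ hinges on this labeling.
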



Let us note that the \emph{centers} $a_1, \ldots, a_\ell$ of $L$ and the
\emph{exponents} $m_1, \ldots, m_\ell$ of $L$ in Theorem~\ref{thm:walsh_map}
are uniquely determined.
The unbounded domain $\comp{L}$ is called a \emph{lemniscatic domain}, see~\cite[p.~106]{Grunsky1978}.  

Theorem~\ref{thm:walsh_map} was first obtained by Walsh in his seminal 1956 
paper~\cite{Walsh1956}.  Further existence proofs were published by 
Grunsky~\cite{Grunsky1957a,Grunsky1957b,Grunsky1978}, 
Jenkins~\cite{Jenkins1958}, and Landau~\cite{Landau1961}.
None of these papers 
contain any explicit example, which might be the reason why Walsh's map has 
not been widely used so far.
However, in~\cite[pp.~374--377]{Walsh2000}, Gaier recognizes this 
conformal map as one of Walsh's major contributions.
The first explicit examples of Walsh's map were derived 
in~\cite{SeteLiesen2016}.
In~\cite{NasserLiesenSete2016}, Nasser, Liesen and the second author obtained a 
numerical method for computing Walsh's map for sets $E$ bounded by 
smooth Jordan curves.  This numerical algorithm 
also yields a method for the numerical computation of the logarithmic capacity 
of compact sets~\cite{LiesenSeteNasser2017}.

Walsh's conformal map is intimately connected with several quantities from 
potential and approximation theory, including the logarithmic capacity 
$\capacity(E)$, 
the equilibrium measure of $E$ (see Theorem~\ref{thm:mj_muE} below),
and the Green's function of $\comp{L}$ and of $\comp{E}$, given 
respectively by the simple expressions
\begin{equation*}
g_L(w) = \sum_{j=1}^\ell m_j \log \abs{w - a_j} - \log(\capacity(E))
\quad \text{and} \quad
g_E(z) = g_L(\Phi(z)).
\end{equation*}
Note that $g_L$ is harmonic in $\C \setminus \{ a_1, \ldots, a_\ell \}$, and 
has the simplest possible form.

An interesting application of Walsh's conformal map in approximation theory is 
the construction of the \emph{Faber-Walsh polynomials} of compact sets $E$ with 
several components as in Theorem~\ref{thm:walsh_map}; see Walsh's original 
article~\cite{Walsh1958}, Suetin's book~\cite{Suetin1998}, and the 
article~\cite{SeteLiesen2017}.
Any function $f$ that is analytic on $E$ can be expanded in a series in the 
Faber-Walsh polynomials on $E$.  The Faber-Walsh polynomials generalize the 
well-known Faber polynomials (defined on simply connected compact sets) and the
Chebyshev polynomials of the first kind (on an interval).
Liesen and the second author considered properties of the Faber-Walsh 
polynomials and gave the first explicit examples in~\cite{SeteLiesen2017}, in 
particular in the case that $E$ is the union of two real intervals or two 
disks.  For two symmetric intervals, the construction is explicit.
In the present paper, we derive a way to numerically compute the 
conformal map for any number of real intervals, which will open up the way for 
polynomial approximation with Faber-Walsh polynomials on sets $E$ consisting of 
several intervals.

For given~$E$ with logarithmic capacity $\capacity(E)$ as in 
Theorem~\ref{thm:walsh_map}, there are three problems to tackle:

1. Find the exponents $m_1,\ldots,m_\ell$.

2. Find the centers $a_1,\ldots,a_\ell$.

3. Find the conformal map $\Phi:\comp{E}\to\comp{L}$.

In our previous work~\cite{SchiefermayrSete2023} and~\cite{SchiefermayrSete-II}, the
focus lies on sets $E$ which are polynomial pre-images of simply connected
compact sets $\Omega$.  This implies that the exponents $m_1, \ldots, m_\ell$
are rational numbers and therefore the set $\partial L$ with $L$ defined
in~\eqref{eqn:lemniscatic_domain} is a classical lemniscate, that is,
the level set of a polynomial.
In this paper, we drop the assumption that $E$ is a polynomial pre-image.
First, we consider the general setting when~$E$ is as in~\eqref{eqn:E} and
second, we consider the case when~$E$ is the union of~$\ell$ real intervals in
more detail.  In both parts, the exponents $m_1,\ldots, m_\ell$ may be
irrational and $\partial L$ is no longer a classical lemniscate.

The paper is organized as follows.
In Section~\ref{sect:equilibrium}, we give a complete solution of Problem~1.
More precisely, we show that $m_j=\mu_E(E_j)$, $j=1,\ldots,\ell$, where $\mu_E$
is the equilibrium measure of~$E$, see Theorem~\ref{thm:mj_muE}.  As a
consequence, together with our previous results, we obtain that if $E$ is a
polynomial pre-image then $\mu_E(E_j)$, $j = 1, \ldots, \ell$, are rational
numbers, see Theorem~\ref{thm:mu_E_rational}.

In Section~\ref{sect:general_results}, we first derive a formula connecting the
exponents $m_1,\ldots,m_{\ell}$ and the centers $a_1,\ldots,a_{\ell}$ with a
certain coefficient in the asymptotic expansion of the Green's function $g_E$,
which is used for computing the centers $a_1,\ldots,a_{\ell}$.  For sets with
$\ell = 2$ components and a symmetry condition, this formula enables us to
derive an explicit expression for $a_1$ and $a_2$ in terms of the Green's
function~$g_E$.

In the last two sections, we focus on the case when $E$ is the union of
$\ell$~real intervals.  In Section~\ref{sect:domain_for_intervals}, we first
recall the formulas for the Green's function $g_E(z)$, the logarithmic
capacity~$\capacity(E)$, and the equilibrium measure~$\mu_E$.
By the results of Section~\ref{sect:equilibrium}, we obtain an explicit
integral formula for the exponents $m_1,\ldots,m_\ell$, see
Theorem~\ref{thm:mj_muE_intervals}.

Concerning the centers, for $\ell = 2$ intervals, we deduce explicit formulas
for $a_1, a_2$ in terms of the endpoints of the intervals,
see Remark~\ref{rem:two_intervals}.  For arbitrary $\ell$, we derive an
algorithm for computing $a_1,\ldots,a_\ell$, see Algorithm~\ref{algo:aj}, which
converges in very few iteration steps to the prescribed tolerance in all our
numerical experiments.

In Section~\ref{sect:map}, we extend the equality of the Green's functions,
$g_E(z) = g_L(w)$, to the \emph{complex} Green's functions, see
Theorem~\ref{thm:Phi_eqn}.  This equation of the complex Green's functions has
a unique solution, see Theorem~\ref{thm:Phi_eqn_unique_sol}, and allows us to
numerically compute $w = \Phi(z)$ for $z \in \comp{E}$, which is illustrated in
several examples.

\section{The Exponents in Terms of the Equilibrium Measure}
\label{sect:equilibrium}

Throughout this article, we make use of the \emph{Wirtinger derivatives}
\begin{equation}
\partial_z = \frac{1}{2} (\partial_x - \ii \partial_y) \quad \text{and} \quad
\partial_{\conj{z}} = \frac{1}{2} (\partial_x + \ii \partial_y).
\end{equation}
In particular, if $f$ is analytic then
\begin{equation} \label{eqn:wirtinger}
2 \partial_z \log \abs{f(z)} = \frac{f'(z)}{f(z)} \quad \text{and} \quad
2 \partial_z \re(f(z)) = f'(z).
\end{equation}
Let $g_E$ be the Green's function of $\comp{E}$ with pole at infinity.
The exponents $m_1, \ldots, m_\ell$ in Theorem~\ref{thm:walsh_map} are related 
to the Green's function  by
\begin{equation} \label{eqn:mj_with_integral}
m_j = \frac{1}{2 \pi \ii} \int_{\gamma_j} 2 \partial_z g_E(z) \, \dd z, \quad
j = 1, \ldots, \ell,
\end{equation}
where $\gamma_j$ is a (smooth) closed curve in $\C \setminus E$ such that its 
winding number satisfies $\wind(\gamma; z) = \delta_{jk}$ for $z \in E_k$, 
i.e., $\gamma_j$ surrounds $E_j$ but no other component $E_k$, $k \neq j$;
see~\cite[Thm.~2.3]{SchiefermayrSete2023}.  
In the next theorem, we show that the exponents $m_j$ are also given by the 
\emph{equilibrium measure} of the components of $E$.  For a definition and 
properties of the equilibrium measure, see, e.g., Ransford~\cite{Ransford1995} 
or Garnett and Marshall~\cite{GarnettMarshall2005} (where it is called 
equilibrium distribution).


\begin{theorem} \label{thm:mj_muE}
Let $E = \cup_{j=1}^\ell E_j$ be as in Theorem~\ref{thm:walsh_map} and let 
$\mu_E$ be the equilibrium measure of $E$.  Then the exponents $m_j$,
determined in Theorem~\ref{thm:walsh_map}, are given by the equilibrium
measure of $E_j$, that is
\begin{equation}
m_j = \mu_E(E_j), \quad j = 1, \ldots, \ell.
\end{equation}
\end{theorem}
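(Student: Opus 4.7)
The plan is to combine the integral representation \eqref{eqn:mj_with_integral} with the standard representation of the Green's function as a logarithmic potential of the equilibrium measure, and then swap the order of integration.

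First, I would recall from potential theory that the Green's function with pole at infinity admits the representation
\begin{equation*}
g_E(z) = \int_E \log \abs{z-t} \, \dd \mu_E(t) - \log(\capacity(E)),
\end{equation*}
valid for $z \in \comp{E}$ (see Ransford~\cite{Ransford1995}). Applying the Wirtinger derivative $2\partial_z$ and using \eqref{eqn:wirtinger} (which gives $2\partial_z \log\abs{z-t} = \tfrac{1}{z-t}$ for $z \neq t$), I can differentiate under the integral sign, justified by the fact that $\gamma_j$ lies at a positive distance from $E$ so that $\frac{1}{z-t}$ is uniformly bounded for $z \in \gamma_j$ and $t \in E$. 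This yields
\begin{equation*}
2 \partial_z g_E(z) = \int_E \frac{1}{z-t} \, \dd \mu_E(t), \qquad z \in \comp{E}.
\end{equation*}

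Next I would substitute this into \eqref{eqn:mj_with_integral} and apply Fubini's theorem, which is again permissible because the integrand $\frac{1}{z-t}$ is continuous and bounded on the compact product $\gamma_j \times E$ while $\mu_E$ is a finite Borel measure. This swaps the order of integration and gives
\begin{equation*}
m_j
= \int_E \biggl( \frac{1}{2\pi \ii} \int_{\gamma_j} \frac{\dd z}{z-t} \biggr) \dd \mu_E(t)
= \int_E \wind(\gamma_j; t) \, \dd \mu_E(t).
\end{equation*}
By the defining property of $\gamma_j$, the winding number equals $1$ on $E_j$ and $0$ on every other component $E_k$; since $\mu_E$ is supported on $E = \bigcup_k E_k$, the last integral collapses to $\mu_E(E_j)$, which is the claim.

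The only real subtleties are the standard analytic justifications (differentiation under the integral, Fubini), both of which follow routinely from the separation between $\gamma_j$ and $E$; I do not anticipate any genuine obstacle beyond quoting the correct form of the potential-theoretic identity for $g_E$. It would also be worth remarking that, as a sanity check, summing over $j$ recovers $\sum_{j=1}^\ell m_j = \mu_E(E) = 1$, consistent with Theorem~\ref{thm:walsh_map}.
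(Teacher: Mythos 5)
Your proposal is correct and follows essentially the same route as the paper's proof: write $g_E$ as the logarithmic potential of $\mu_E$ minus $\log(\capacity(E))$, differentiate with $2\partial_z$, insert into \eqref{eqn:mj_with_integral}, and exchange the order of integration via Fubini so that the inner contour integral produces the winding number. Your explicit justifications (boundedness of $\tfrac{1}{z-t}$ on $\gamma_j \times E$) and the sanity check $\sum_j m_j = 1$ are fine additions but do not change the argument.
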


\begin{proof}
Since $\capacity(E) > 0$, the Green's function $g_E$ can be written as
\begin{equation*}
g_E(z) = \int_E \log \abs{z - \zeta} \, \dd \mu_E(\zeta) - \log(\capacity(E)),
\quad z \in \comp{E},
\end{equation*}
see~\cite[p.~85, Thm.~4.4]{GarnettMarshall2005} or~\cite[p.~107]{Ransford1995}.
Taking the Wirtinger derivative and using~\eqref{eqn:wirtinger}, we obtain 
(differentiation and integration can be exchanged)
\begin{equation*}
2 \partial_z g_E(z)
= \int_E 2 \partial_z \log \abs{z - \zeta} \, \dd \mu_E(\zeta)
= \int_E \frac{1}{z - \zeta} \, \dd \mu_E(\zeta).
\end{equation*}
%
Let $\gamma_j$ be a $C^1$-smooth closed curve in $\C \setminus E$ such that its 
winding number satisfies $\wind(\gamma_j; z) = \delta_{jk}$ for $z \in E_k$, 
i.e., $\gamma_j$ surrounds $E_j$ but no $E_k$ with $k \neq j$.  Then, 
by~\eqref{eqn:mj_with_integral}, the exponent $m_j$ is given by
\begin{equation*}
m_j
= \frac{1}{2 \pi \ii} \int_{\gamma_j} \int_E \frac{1}{z - \zeta} \, \dd 
\mu_E(\zeta) \, \dd z
= \int_E \frac{1}{2 \pi \ii} \int_{\gamma_j} \frac{1}{z - \zeta} \, \dd z \, 
\dd \mu_E(\zeta),
\end{equation*}
where the order of integration can be exchanged by Fubini's theorem.
Since $E = \cup_{j=1}^\ell E_j$, we obtain
\begin{equation*}
m_j
= \sum_{k=1}^\ell \int_{E_k} \frac{1}{2 \pi \ii} \int_{\gamma_j} \frac{1}{z - 
\zeta} \, \dd z \, \dd \mu_E(\zeta)
= \int_{E_j} 1 \, \dd \mu_E(\zeta) = \mu_E(E_j),
\end{equation*}
as claimed.
\end{proof}

As a consequence of Theorem~\ref{thm:mj_muE}, the equilibrium measures 
$\mu_E(E_j)$, $j = 1, \ldots, \ell$, are rational numbers whenever $E$ is a 
polynomial pre-image.  The precise statement is the content of the next 
theorem.


\begin{theorem}\label{thm:mu_E_rational}
Let $E = \cup_{j=1}^\ell E_j$ be as in Theorem~\ref{thm:walsh_map} and let 
$\mu_E$ be the equilibrium measure of $E$. If $E$ is a polynomial pre-image 
then $\mu_E(E_j)$ is a rational number for $j=1,\ldots,\ell$. More precisely, 
if $E = P^{-1}(\Omega) \coloneq \{ z \in \C : P(z) \in\Omega\}$ with a 
polynomial $P$ of degree $n \geq 1$ and a simply connected infinite compact set 
$\Omega\subseteq \C$, then $\mu_E(E_j) = n_j / n$
where $n_j \in \{ 1, \ldots, n \}$
is the number of zeros of $P(z) - \omega$ in $E_j$ for any $\omega \in \Omega$.
\end{theorem}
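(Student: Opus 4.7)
The plan is to combine Theorem~\ref{thm:mj_muE}, which gives $\mu_E(E_j) = m_j$, with the integral formula~\eqref{eqn:mj_with_integral} for $m_j$ and the fact that for a polynomial pre-image the Green's function factors through $P$. The target identity is then $m_j = n_j/n$.

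First, I would invoke the classical identity
\begin{equation*}
g_E(z) = \frac{1}{n} \, g_\Omega(P(z)), \quad z \in \comp{E},
\end{equation*}
which follows from the defining properties of the Green's function: the right hand side is harmonic on $\comp{E}$, vanishes on $\rand E = P^{-1}(\rand \Omega)$, and has the correct logarithmic behaviour $\log \abs{z} + O(1)$ at infinity (since $P(z) \sim z^n$). Applying the Wirtinger derivative and the chain rule yields
\begin{equation*}
2 \partial_z g_E(z) = \frac{1}{n} \, (2\partial_w g_\Omega)(P(z)) \cdot P'(z).
\end{equation*}

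Next, for a smooth closed curve $\gamma_j \subseteq \comp{E}$ with winding number $\delta_{jk}$ around $E_k$, I would substitute $w = P(z)$ in~\eqref{eqn:mj_with_integral} to get
\begin{equation*}
m_j = \frac{1}{n} \cdot \frac{1}{2\pi\ii} \int_{P(\gamma_j)} 2\partial_w g_\Omega(w) \, \dd w.
\end{equation*}
The geometric heart of the argument is then the claim that the image curve $P(\gamma_j)$ has winding number $n_j$ about every point of $\Omega$. This is the key step and the main obstacle. It rests on the fact that $E_j$ is a connected component of $P^{-1}(\Omega)$ and that $P \colon E_j \to \Omega$ is a proper branched covering of degree $n_j$ (equivalently, $P(z) - \omega$ has exactly $n_j$ zeros in $E_j$, counted with multiplicity, independently of $\omega \in \Omega$). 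The argument principle applied to a small perturbation $\gamma_j$ of $\rand E_j$, combined with the fact that $P^{-1}(\omega)$ for $\omega$ in the interior of $\Omega$ is disjoint from $\rand E_j$, delivers exactly this count.

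Finally, applying~\eqref{eqn:mj_with_integral} (or equivalently Theorem~\ref{thm:mj_muE}) to the simply connected set $\Omega$ (the single-component case, where the unique exponent equals $1$) gives
\begin{equation*}
\frac{1}{2\pi\ii} \int_\gamma 2\partial_w g_\Omega(w) \, \dd w = 1
\end{equation*}
for any curve $\gamma$ winding once around $\Omega$. Since $P(\gamma_j)$ winds $n_j$ times around $\Omega$, this yields $m_j = n_j/n$. Combined with Theorem~\ref{thm:mj_muE}, we conclude $\mu_E(E_j) = n_j/n \in \Q$, as claimed.
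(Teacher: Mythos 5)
Your proposal is correct, and at the top level it follows the same reduction as the paper: both start from Theorem~\ref{thm:mj_muE}, which gives $\mu_E(E_j) = m_j$, and then use the identification $m_j = n_j/n$ for polynomial pre-images. The difference is in how that second ingredient is obtained: the paper simply cites \cite[Thm.~3.2]{SchiefermayrSete2023}, whereas you reprove it from scratch via the functorial identity $g_E = \tfrac{1}{n}\, g_\Omega \circ P$, the Wirtinger chain rule, the change of variables $w = P(z)$ in~\eqref{eqn:mj_with_integral}, and the argument principle to identify $\wind(P(\gamma_j);\omega) = n_j$ for $\omega \in \Omega$. This buys a self-contained argument independent of the external reference, at the cost of re-deriving known material. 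Two small points in your write-up deserve tightening, though neither is a genuine gap. First, the appeal to ``$\omega$ in the interior of $\Omega$'' is unnecessary and slightly misleading, since $\Omega$ may have empty interior (e.g.\ $\Omega = \cc{-1,1}$); the count works for every $\omega \in \Omega$ because all zeros of $P - \omega$ lie in $E = P^{-1}(\Omega)$ while $\gamma_j \subseteq \C \setminus E$, so the argument principle gives $\wind(P(\gamma_j);\omega) = \sum_k n_k \wind(\gamma_j; E_k) = n_j$, and constancy in $\omega$ follows from connectedness of $\Omega$ and local constancy of the winding number. Second, the final step ``winding $n_j$ times gives $n_j$'' should be justified by noting that the integral of $2\partial_w g_\Omega$ over a closed curve in $\C \setminus \Omega$ depends only on its winding number about $\Omega$ — either because $\C \setminus \Omega$ deformation-retracts onto a circle around $\Omega$, or more directly by writing $2\partial_w g_\Omega(w) = \int_\Omega (w-\zeta)^{-1}\, \dd\mu_\Omega(\zeta)$ and using Fubini exactly as in the proof of Theorem~\ref{thm:mj_muE}, which yields $\int_\Omega \wind(P(\gamma_j);\zeta)\, \dd\mu_\Omega(\zeta) = n_j$.
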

\begin{proof}
This follows by combining Theorem~\ref{thm:mj_muE} 
and~\cite[Thm.~3.2]{SchiefermayrSete2023}.
\end{proof}


If $E$ consists of several real intervals, then the condition that $E$ is a polynomial pre-image of $\cc{-1, 1}$ is \emph{equivalent} to the condition that $\mu_E(E_1), \ldots, \mu_E(E_\ell)$ are rational; see Theorem~\ref{thm:gE_intervals}\,\ref{it:mu_E_rational}. By Theorem~\ref{thm:mu_E_rational}, one direction holds for more general compact sets.  To the authors' knowledge, whether the other direction can also be shown is an open question.

\section{The Lemniscatic Domain for General Sets}
\label{sect:general_results}


Let us first discuss the Green's functions of $\comp{E}$ and $\comp{L}$,
where $E$ and $L$ are as in Theorem~\ref{thm:walsh_map}.  The Green's function
(with pole at~$\infty$) of $\comp{L}$ is
\begin{equation} \label{eqn:gL}
g_L(w) = \sum_{j=1}^\ell m_j \log \abs{w-a_j} - \log(\capacity(E)).
\end{equation}
The critical points of $g_L$ are the zeros of
\begin{equation} \label{eqn:gL_wirtinger}
2 \partial_w g_L(w) = \sum_{j=1}^\ell \frac{m_j}{w - a_j},
\end{equation}
and hence are the solutions of the equation
\begin{equation} \label{eqn:poly_crit_pts_gL}
\sum_{j=1}^\ell m_j \prod_{\substack{i=1 \\ i \neq j}}^\ell (w-a_i) = 0,
\end{equation}
where the left-hand side is a monic polynomial of degree~$\ell - 1$.  In particular, $g_L$ has
$\ell-1$ critical points (counted with multiplicity), denoted by
$w_1, \ldots, w_{\ell-1}$, which are located in $\C \setminus L$,
see~\cite[Thm.~2.5]{SchiefermayrSete2023} or~\cite[pp.~67--68]{Walsh1969}.
The Green's function $g_E$ of $\comp{E}$ is related to $g_L$ by
\begin{equation} \label{eqn:gE_gL}
g_E(z) = g_L(w) = g_L(\Phi(z)), \quad z \in \comp{E},
\end{equation}
with the conformal map $\Phi$ from Theorem~\ref{thm:walsh_map};
see~\cite[Sect.~III]{Walsh1958} or~\cite[Sect.~2]{SchiefermayrSete2023}.  In 
particular, $\capacity(E) = \capacity(L)$.
Let $z_1, \ldots, z_{\ell-1} \in \C \setminus E$ denote the critical points
of $g_E$, then $w_j = \Phi(z_j)$, $j = 1, \ldots, \ell-1$, with a suitable
labeling of the critical points, which follows from~\eqref{eqn:gE_gL}, see 
also~\cite[Lem.~2.1]{SchiefermayrSete2023}.
We thus have
\begin{equation} \label{eqn:gL_gE_at_critical_pts}
g_L(w_j)=g_E(z_j), \quad j = 1, \ldots, \ell-1.
\end{equation}

It is well known~\cite[Thm.~5.2.1]{Ransford1995} that the Green's function has the asymptotic behavior
\begin{equation} \label{eqn:gE_asymptotic_with_capacity}
g_E(z) = \log \abs{z} - \log(\capacity(E)) + o(1) \quad \text{at } \infty.
\end{equation}
Next, we obtain a relation between the parameters $a_j, m_j$ and the coefficient of $z^{-2}$ in the Laurent series of $\partial_z g_E$; see~Theorem~\ref{thm:sum_mj_aj}. For this purpose, let us consider the asymptotic behaviour of $g_E$ and $\partial_z g_E$ in more detail.


\begin{lemma} \label{lem:expansion_gE}
Let $E$ be as in Theorem~\ref{thm:walsh_map} and let $g_E$ be the Green's 
function of $\comp{E}$.  Then the function $\partial_z g_E$ is analytic in $\C 
\setminus E$ with
\begin{equation} \label{eqn:wirtinger_derivative_gE}
2 \partial_z g_E(z) = \frac{1}{z} + \frac{\alpha}{z^2} + \cO \left( 
\frac{1}{z^3} \right) \quad \text{at } \infty,
\end{equation}
where $\alpha\in\C$, and the Green's function has the expansion
\begin{equation} \label{eqn:gE_asymtptotic}
g_E(z) = \log \abs{z} - \log(\capacity(E)) - \re \left( \frac{\alpha}{z} \right) 
+ \cO \left( \frac{1}{z^2} \right) \quad \text{at } \infty.
\end{equation}
\end{lemma}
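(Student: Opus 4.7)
The plan is to exploit the potential-theoretic representation of $g_E$ that was already invoked in the proof of Theorem~\ref{thm:mj_muE}, namely
\begin{equation*}
g_E(z) = \int_E \log \abs{z - \zeta} \, \dd \mu_E(\zeta) - \log(\capacity(E)),
\quad z \in \comp{E},
\end{equation*}
and obtain both expansions by Taylor-expanding the integrand at $\infty$.

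First, applying the Wirtinger derivative $\partial_z$ under the integral sign (justified by the compactness of $E$ and the local boundedness of the differentiated integrand away from $E$) and using~\eqref{eqn:wirtinger} yields
\begin{equation*}
2 \partial_z g_E(z) = \int_E \frac{1}{z - \zeta} \, \dd \mu_E(\zeta),
\quad z \in \comp{E},
\end{equation*}
exactly as in the proof of Theorem~\ref{thm:mj_muE}. Since the integrand is holomorphic in $z \in \C \setminus E$ for each $\zeta \in E$, this integral represents a function holomorphic on $\C \setminus E$, proving the first claim.

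Next, for $\abs{z} > \max_{\zeta \in E} \abs{\zeta}$, the geometric series
\begin{equation*}
\frac{1}{z - \zeta}
= \frac{1}{z} \sum_{k=0}^{\infty} \Bigl( \frac{\zeta}{z} \Bigr)^k
\end{equation*}
converges uniformly in $\zeta \in E$, so termwise integration against the probability measure $\mu_E$ (which gives $\mu_E(E) = 1$) produces
\begin{equation*}
2 \partial_z g_E(z)
= \frac{1}{z} + \frac{\alpha}{z^2} + \cO(1/z^3) \quad \text{at } \infty,
\quad \text{where} \quad \alpha \coloneq \int_E \zeta \, \dd \mu_E(\zeta).
\end{equation*}
This establishes~\eqref{eqn:wirtinger_derivative_gE}. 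For~\eqref{eqn:gE_asymtptotic}, I would likewise expand $\log\abs{z - \zeta} = \log \abs{z} + \re(\log(1 - \zeta/z))$ for large $\abs{z}$ and use $\log(1-w) = -w + \cO(w^2)$ uniformly for $w = \zeta/z$ with $\zeta \in E$, yielding
\begin{equation*}
\log \abs{z - \zeta} = \log \abs{z} - \re\Bigl( \frac{\zeta}{z} \Bigr) + \cO(1/z^2),
\end{equation*}
uniformly in $\zeta \in E$. Integrating against $\mu_E$ and subtracting $\log(\capacity(E))$ gives~\eqref{eqn:gE_asymtptotic} with the \emph{same} constant $\alpha = \int_E \zeta \, \dd \mu_E(\zeta)$, and the two expansions are consistent via~\eqref{eqn:wirtinger}.

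The main obstacle is essentially bookkeeping: one must verify that differentiation under the integral and termwise integration of the geometric/logarithmic series are legitimate (both following from uniform convergence on $E$ for $\abs{z}$ bounded below), and one must check that the constant $\alpha$ coming out of the expansion of $\log\abs{z - \zeta}$ is the same one that appears in $2\partial_z g_E$. Both are routine once the representation of $g_E$ via $\mu_E$ is in hand, so there is no deep difficulty.
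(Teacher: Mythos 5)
Your proof is correct, and it takes a genuinely different route from the paper. The paper's proof gets analyticity of $\partial_z g_E$ from harmonicity ($\partial_{\conj{z}}\partial_z g_E = \tfrac14 \Delta g_E = 0$), then derives~\eqref{eqn:wirtinger_derivative_gE} from the identity $2\partial_z g_E(z) = \sum_{j=1}^\ell \frac{m_j}{\Phi(z)-a_j}\Phi'(z)$ of~\cite[Lem.~2.1]{SchiefermayrSete2023} together with the normalization $\Phi(z) = z + \cO(1/z)$ and $\sum_j m_j = 1$; the expansion~\eqref{eqn:gE_asymtptotic} is then obtained by integrating $f' = 2\partial_z g_E$ for $f = g_E + \ii h_E$ and fixing the constant via~\eqref{eqn:gE_asymptotic_with_capacity}. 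You instead work directly with the logarithmic potential of the equilibrium measure, $g_E(z) = \int_E \log\abs{z-\zeta}\,\dd\mu_E(\zeta) - \log(\capacity(E))$ (the same representation the paper uses in Theorem~\ref{thm:mj_muE}), differentiate under the integral, and expand the kernel in a uniformly convergent series; the justifications you flag (differentiation under the integral, termwise integration) are indeed routine because $E$ is compact and $\abs{z}$ stays large. What your route buys is independence from the conformal map $\Phi$ and from the cited results of~\cite{SchiefermayrSete2023}, plus the explicit identification $\alpha = \int_E \zeta\,\dd\mu_E(\zeta)$, i.e.\ $\alpha$ is the centroid of the equilibrium measure — which, combined with Theorem~\ref{thm:sum_mj_aj}, gives the pleasant statement $\sum_{j=1}^\ell m_j a_j = \int_E \zeta\,\dd\mu_E(\zeta)$, and also makes the consistency of the two occurrences of $\alpha$ automatic rather than something to be checked by integration. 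What the paper's route buys is that it stays entirely within the $\Phi$/$g_L$ machinery already set up for the later results (and in particular re-uses~\eqref{eqn:wirtinger_gE_gL}, which is needed again in Theorem~\ref{thm:Phi_eqn}), at the cost of identifying $\alpha$ only implicitly as a Laurent coefficient.
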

\begin{proof}
Since $g_E$ is harmonic in $\C \setminus E$ and 
$\partial_{\conj{z}} \partial_z g_E = \frac{1}{4} \Delta g_E = 0$, the function $\partial_z g_E$ is analytic in $\C \setminus E$. By~\cite[Lem.~2.1 and~(2.10)]{SchiefermayrSete2023},
\begin{equation} \label{eqn:wirtinger_gE_gL}
2 \partial_z g_E(z) = 2 \partial_w g_L(\Phi(z)) \cdot \Phi'(z)
= \sum_{j=1}^\ell \frac{m_j}{\Phi(z) - a_j} \Phi'(z), \quad z \in \C \setminus E.
\end{equation}
Since $\Phi(z) = z + \cO(1/z)$ and thus $\Phi'(z) = 1 + \cO(1/z^2)$, we obtain that
\begin{equation*}
\lim_{z \to \infty} 2 \partial_z g_E(z) = 0 \quad \text{and} \quad
\lim_{z \to \infty} z 2 \partial_z g_E(z) = \sum_{j=1}^\ell m_j = 1,
\end{equation*}
which completes the proof of~\eqref{eqn:wirtinger_derivative_gE}.  Let $h_E$
denote a harmonic conjugate of $g_E$, so that $f(z) = g_E(z) + \ii h_E(z)$ is
analytic (note that $h_E$ and $f$ are in general multi-valued) and
$g_E(z) = \re(f(z))$.  Then, by~\eqref{eqn:wirtinger}, 
$f'(z) = 2 \partial_z g_E(z)$.  Integrating~\eqref{eqn:wirtinger_derivative_gE} yields
$f(z) = \log(z) + c - \alpha/z + \cO(1/z^2)$.  Taking the real part and
recalling~\eqref{eqn:gE_asymptotic_with_capacity} yields~\eqref{eqn:gE_asymtptotic}.
\end{proof}


\begin{theorem} \label{thm:sum_mj_aj}
In the notation of Theorem~\ref{thm:walsh_map}, let $g_E$ be the Green's function of $\comp{E}$ and let $\alpha$ be the coefficient of $1/z^2$ in the Laurent series of $2 \partial_z g_E$ at infinity, see~\eqref{eqn:wirtinger_derivative_gE}. Then
\begin{equation} \label{eqn:sum_mj_aj}
\sum_{j=1}^\ell m_j a_j = \alpha.
\end{equation}
\end{theorem}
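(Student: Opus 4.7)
The plan is to compute the Laurent expansion of the right-hand side of equation~\eqref{eqn:wirtinger_gE_gL} at infinity and match the coefficient of $1/z^2$ with $\alpha$. Since $\Phi(z) = z + \cO(1/z)$ at $\infty$, the map $\Phi$ stays close to the identity, so each summand $\Phi'(z)/(\Phi(z)-a_j)$ should expand as $1/z + a_j/z^2 + \cO(1/z^3)$, and the weighted sum will contribute $\sum_j m_j a_j$ to the $1/z^2$ coefficient after using $\sum_j m_j = 1$.

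More concretely, I would first observe that $\Phi$ admits an expansion $\Phi(z) = z + b_1/z + \cO(1/z^2)$ near infinity (the $\cO(1)$ term vanishes by the normalization~\eqref{eqn:Phi}), and hence $\Phi'(z) = 1 + \cO(1/z^2)$. Then for each fixed $j$, I would write
\begin{equation*}
\frac{1}{\Phi(z)-a_j}
= \frac{1}{z}\cdot\frac{1}{1 - a_j/z + \cO(1/z^2)}
= \frac{1}{z} + \frac{a_j}{z^2} + \cO\!\left(\frac{1}{z^3}\right),
\end{equation*}
and multiply by $\Phi'(z) = 1 + \cO(1/z^2)$, which does not affect the coefficients of $1/z$ or $1/z^2$. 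Summing over $j$ with weights $m_j$ gives
\begin{equation*}
\sum_{j=1}^\ell \frac{m_j}{\Phi(z)-a_j}\,\Phi'(z)
= \frac{\sum_{j=1}^\ell m_j}{z} + \frac{\sum_{j=1}^\ell m_j a_j}{z^2} + \cO\!\left(\frac{1}{z^3}\right).
\end{equation*}
Using $\sum_{j=1}^\ell m_j = 1$ (from Theorem~\ref{thm:walsh_map}) this simplifies to $1/z + (\sum_j m_j a_j)/z^2 + \cO(1/z^3)$.

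By~\eqref{eqn:wirtinger_gE_gL}, the left-hand side equals $2\partial_z g_E(z)$, whose Laurent expansion was shown in~\eqref{eqn:wirtinger_derivative_gE} to be $1/z + \alpha/z^2 + \cO(1/z^3)$. Matching the $1/z^2$ coefficients yields the claim. The argument is essentially a routine expansion; the only mild subtlety is justifying that $\Phi$ has no constant term in its expansion at infinity, which follows directly from the normalization $\Phi(z) = z + \cO(1/z)$ in Theorem~\ref{thm:walsh_map}. No serious obstacle is expected.
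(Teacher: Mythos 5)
Your argument is correct: expanding $\Phi(z)-a_j = z\bigl(1 - a_j/z + \cO(1/z^2)\bigr)$ and $\Phi'(z) = 1 + \cO(1/z^2)$, the weighted sum in~\eqref{eqn:wirtinger_gE_gL} indeed has Laurent expansion $1/z + \bigl(\sum_j m_j a_j\bigr)/z^2 + \cO(1/z^3)$, and comparing with~\eqref{eqn:wirtinger_derivative_gE} and using uniqueness of Laurent coefficients gives $\sum_j m_j a_j = \alpha$ with no circularity. The route differs from the paper's: there, the starting point is the contour-integral identity $\sum_{j} m_j a_j = \frac{1}{2\pi\ii}\int_{\Phi(\Gamma)} w\, 2\partial_w g_L(w)\,\dd w$ (quoted from Thm.~2.3 of the earlier paper), which is transplanted by the change of variables $w=\Phi(z)$ into $\frac{1}{2\pi\ii}\int_\Gamma \Phi(z)\,2\partial_z g_E(z)\,\dd z$ and then read off as the $1/z$-coefficient of the product $\Phi(z)\,2\partial_z g_E(z) = 1 + \alpha/z + \cO(1/z^2)$; your proof instead works directly with the pullback formula~\eqref{eqn:wirtinger_gE_gL} (itself quoted in the paper's proof of Lemma~\ref{lem:expansion_gE}) and matches the $1/z^2$-coefficient term by term. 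What each buys: your version is more elementary and self-contained, needing only the composition formula and the normalization of $\Phi$, whereas the paper's version leverages a general moment-type integral identity that packages $\sum_j m_j a_j$ intrinsically as a contour integral (and extends naturally to similar quantities), at the cost of invoking that external theorem and a change-of-variables step. Both proofs rest on the same analytic facts, so neither has a gap; the difference is in which cited identity does the work.
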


\begin{proof}
Let $\Gamma$ be a closed curve in $\C \setminus E$ that contains $E$ in its  
interior, i.e., $\wind(\Gamma; z) = 1$ for all $z \in E$, then $\Phi(\Gamma)$ 
contains $L$ in its interior and, by~\cite[Thm.~2.3]{SchiefermayrSete2023},
\begin{equation*}
\sum_{j=1}^\ell m_j a_j
= \frac{1}{2 \pi \ii} \int_{\Phi(\Gamma)} w 2 \partial_w g_L(w) \, \dd w
= \frac{1}{2 \pi \ii} \int_\Gamma \Phi(z) 2 \partial_z g_E(z) \, \dd z.
\end{equation*}
Since $\Phi(z) 2 \partial_z g_E(z)$ is analytic in $\C \setminus E$ and since  
$\Gamma$ contains $E$ in its interior, the integral is equal to the coefficient 
of $1/z$ in the Laurent series of $\Phi(z) 2 \partial_z g_E(z)$ at infinity.
By~\eqref{eqn:Phi} and~\eqref{eqn:wirtinger_derivative_gE},
\begin{equation*}
\Phi(z) 2 \partial_z g_E(z) = \bigl( z + \cO(z^{-1}) \bigr)
\bigl( z^{-1} + \alpha z^{-2} + \cO(z^{-3}) \bigr)
= 1 + \alpha z^{-1} + \cO(z^{-2}),
\end{equation*}
which completes the proof.
\end{proof}


Theorem~\ref{thm:sum_mj_aj} has two remarkable consequences:
Corollary~\ref{cor:sum_mj_aj_poly_preimage} and Theorem~\ref{thm:sum_mj_aj_ell_intervals}.

\begin{corollary} \label{cor:sum_mj_aj_poly_preimage}
If $E$ is a polynomial pre-image, that is, $E = P^{-1}(\Omega)$, where
$P(z) = \sum_{k=0}^n p_k z^k$ with $p_n \neq 0$ is a polynomial of degree $n \geq 2$,
and $\Omega \subseteq \C$ is a simply connected, infinite compact set,
then $\alpha$ defined in Lemma~\ref{lem:expansion_gE} is given by
\begin{equation} \label{eqn:sum_mj_aj_poly_preimage}
\sum_{j=1}^\ell m_j a_j = \alpha = - \frac{p_{n-1}}{n p_n}.
\end{equation}
\end{corollary}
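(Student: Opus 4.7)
The plan is to invoke Theorem~\ref{thm:sum_mj_aj}, which reduces the claim to identifying the coefficient $\alpha$ of $1/z^2$ in the Laurent expansion of $2\partial_z g_E(z)$ at infinity with $-p_{n-1}/(np_n)$. So the task is purely to compute this coefficient explicitly when $g_E$ is the Green's function of the pre-image $\comp{E} = \comp{P^{-1}(\Omega)}$.

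The key tool is the standard identity for Green's functions of polynomial pre-images,
\begin{equation*}
g_E(z) = \tfrac{1}{n} g_\Omega(P(z)), \quad z \in \comp{E},
\end{equation*}
which follows since the right-hand side is harmonic on $\comp{E}$, vanishes on $\partial E = P^{-1}(\partial \Omega)$, and has logarithmic pole of strength one at infinity. Applying the chain rule for the Wirtinger derivative gives
\begin{equation*}
2 \partial_z g_E(z) = \tfrac{1}{n} \, 2 \partial_w g_\Omega(P(z)) \cdot P'(z).
\end{equation*}
Since $\Omega$ is simply connected and of positive capacity, Lemma~\ref{lem:expansion_gE} applied to $g_\Omega$ yields $2\partial_w g_\Omega(w) = 1/w + \cO(1/w^2)$, hence
\begin{equation*}
2 \partial_z g_E(z)
= \frac{1}{n} \cdot \frac{P'(z)}{P(z)} + \cO\!\left(\frac{P'(z)}{P(z)^2}\right),
\end{equation*}
and the error term is $\cO(1/z^{n+1}) = \cO(1/z^3)$ since $n \geq 2$.

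It therefore suffices to expand $P'(z)/P(z)$ at infinity up to order $1/z^2$. Writing $P(z) = p_n z^n + p_{n-1} z^{n-1} + \cO(z^{n-2})$ and $P'(z) = n p_n z^{n-1} + (n-1) p_{n-1} z^{n-2} + \cO(z^{n-3})$, a direct geometric-series expansion of $1/P(z)$ gives
\begin{equation*}
\frac{P'(z)}{P(z)} = \frac{n}{z} - \frac{p_{n-1}}{p_n} \cdot \frac{1}{z^2} + \cO\!\left(\frac{1}{z^3}\right).
\end{equation*}
Dividing by $n$ and comparing with~\eqref{eqn:wirtinger_derivative_gE} identifies the coefficient of $1/z^2$ as $\alpha = -p_{n-1}/(n p_n)$. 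Combining with Theorem~\ref{thm:sum_mj_aj} finishes the proof. The only step requiring attention is the book-keeping in the Laurent expansion of $P'/P$; everything else is a direct application of the preceding results.
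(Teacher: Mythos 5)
Your proof is correct, and it reaches the same core computation as the paper by a slightly different route. The paper's proof starts from the identity $2 \partial_z g_E(z) = \frac{1}{n} \, (\cR \circ P)'(z) / (\cR \circ P)(z)$, cited from \cite[Eq.~(3.7)]{SchiefermayrSete2023}, where $\cR$ is the exterior Riemann map of $\Omega$, and then carries the coefficients $d_1, d_0$ of $\cR$ through the Laurent expansion (they cancel). You instead use the classical pre-image formula $g_E = \tfrac{1}{n}\, g_\Omega \circ P$ together with Lemma~\ref{lem:expansion_gE} applied to $\Omega$ (only the leading term $2\partial_w g_\Omega(w) = 1/w + \cO(1/w^2)$ is needed); since $g_\Omega(w) = \log\abs{\cR(w)}$, this is the same identity in disguise, but your version is self-contained, avoids the Riemann map and the external citation, and makes transparent where $n \geq 2$ enters (to push the error $\cO\bigl(P'(z)/P(z)^2\bigr) = \cO(z^{-n-1})$ below the $z^{-2}$ term), whereas in the paper's argument $n \geq 2$ is needed to absorb the constant $d_0$ into $\cO(z^{n-2})$. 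The remaining bookkeeping, expanding $P'(z)/P(z) = n/z - (p_{n-1}/p_n) z^{-2} + \cO(z^{-3})$ and invoking Theorem~\ref{thm:sum_mj_aj}, is identical in both arguments and your expansion is correct. The only point to tighten is your justification of $g_E = \tfrac{1}{n}\, g_\Omega \circ P$: the maximum-principle argument needs boundary regularity (or a quasi-everywhere statement); this is standard here because $\Omega$ and the components of $E$ are infinite connected compact sets, but it deserves a word or a reference (e.g., Ransford, Thm.~5.2.5).
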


The proof of Corollary~\ref{cor:sum_mj_aj_poly_preimage} provides an alternative proof of~\cite[Eq.~(3.21)]{SchiefermayrSete2023} and is given in Appendix~\ref{sect:appendix}.

From Theorem~\ref{thm:sum_mj_aj}, for the simplest case $\ell = 2$, we obtain explicit formulas for the centers $a_1$ and $a_2$ if $E_1^* = E_1$ and $E_2^* = E_2$, where $S^* = \{ \conj{z} : z \in S \}$ is the reflection on the real line of a set $S \subseteq \C$.
Note that if $E_j^* = E_j$, then $E_j \cap \R$ is a point or an interval.
If all components $E_j$ satisfy $E_j^* = E_j$, we always label them from left
to right as in~\cite[p.~495]{SchiefermayrSete2023}
and~\cite[Sect.~2.1]{SchiefermayrSete-II}.


\begin{theorem} \label{thm:a1a2}
Let $E = E_1 \cup E_2$ and let $z_1 \in \C \setminus E$ be the critical point of $g_E$. Then
\begin{equation} \label{eqn:absa2a1}
\abs{a_2 - a_1} = \frac{\capacity(E)}{m_1^{m_1} m_2^{m_2}} \exp(g_E(z_1))\eqcolon \beta.
\end{equation}
If $E_1^* = E_1$ and $E_2^* = E_2$ then $a_1, a_2 \in \R$ with $a_1 < a_2$ and
\begin{equation} \label{eqn:a2a1_explicit}
a_1 = \alpha - m_2 \beta, \quad a_2 = \alpha + m_1 \beta,
\end{equation}
where $\alpha$ is given in~\eqref{eqn:wirtinger_derivative_gE}.
\end{theorem}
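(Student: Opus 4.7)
The plan is to treat the two parts of the theorem separately, first deriving the explicit value of $\abs{a_2 - a_1}$ from the unique critical point of $g_L$, and then using a symmetry argument together with Theorem~\ref{thm:sum_mj_aj} to pin down $a_1$ and $a_2$.

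For~\eqref{eqn:absa2a1}, the key observation is that for $\ell = 2$ the critical point equation~\eqref{eqn:poly_crit_pts_gL} reduces to the linear equation $m_1(w - a_2) + m_2(w - a_1) = 0$, with unique root $w_1 = m_2 a_1 + m_1 a_2$ (using $m_1 + m_2 = 1$). A short computation gives
\begin{equation*}
w_1 - a_1 = m_1 (a_2 - a_1), \qquad w_1 - a_2 = - m_2 (a_2 - a_1),
\end{equation*}
so that evaluating~\eqref{eqn:gL} at $w_1$ yields
\begin{equation*}
g_L(w_1) = m_1 \log(m_1 \abs{a_2-a_1}) + m_2 \log(m_2 \abs{a_2-a_1}) - \log \capacity(E) = \log \frac{m_1^{m_1} m_2^{m_2} \abs{a_2 - a_1}}{\capacity(E)}.
\end{equation*}
Combining this with $g_E(z_1) = g_L(w_1)$ from~\eqref{eqn:gL_gE_at_critical_pts} and exponentiating gives~\eqref{eqn:absa2a1}.

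For~\eqref{eqn:a2a1_explicit}, I would first show that $a_1, a_2 \in \R$ under the symmetry $E_j^* = E_j$. Setting $\tilde\Phi(z) \coloneq \conj{\Phi(\conj z)}$, the hypothesis $E^* = E$ makes $\tilde\Phi$ a conformal map from $\comp{E}$ onto the complement of the lemniscatic set $\tilde L$ with the same exponents $m_1, m_2$ but centers $\conj{a_1}, \conj{a_2}$, still satisfying the normalization in~\eqref{eqn:Phi}. Uniqueness in Theorem~\ref{thm:walsh_map} forces $\tilde\Phi = \Phi$, hence $\Phi(\conj z) = \conj{\Phi(z)}$ and $\{\conj{a_1}, \conj{a_2}\} = \{a_1, a_2\}$. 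For $\ell = 2$ this leaves two cases: $a_1, a_2 \in \R$, or $a_2 = \conj{a_1} \notin \R$. In the latter case, the two components of $L$ are reflections of each other across $\R$ and neither is individually $\R$-symmetric, which contradicts the fact that the component of $L$ corresponding to $E_j$ must be $\R$-symmetric (as $E_j^* = E_j$ and $\Phi(\conj z) = \conj{\Phi(z)}$). Hence $a_1, a_2 \in \R$, and the left-to-right labeling convention combined with the monotonicity of $\Phi$ on the real line (which follows from $\Phi(\R \cap \comp{E}) \subseteq \R$ and $\Phi(z) = z + \cO(1/z)$ at infinity) yields $a_1 < a_2$, so that $a_2 - a_1 = \beta$.

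To conclude, I would invoke Theorem~\ref{thm:sum_mj_aj}, which here reads $m_1 a_1 + m_2 a_2 = \alpha$; note that $\alpha \in \R$, either as a consequence of $a_1, a_2 \in \R$, or directly by applying the same reflection trick to the Laurent expansion in~\eqref{eqn:wirtinger_derivative_gE}. Together with $a_2 - a_1 = \beta$ and $m_1 + m_2 = 1$, solving this $2 \times 2$ linear system is elementary and produces~\eqref{eqn:a2a1_explicit}. The main obstacle I anticipate is the symmetry step excluding a non-real conjugate pair of centers: it is transparent when the $E_j$ are Jordan curves (since the homeomorphism extension at the end of Theorem~\ref{thm:walsh_map} provides the boundary matching directly), but for general compact components $E_j$ one has to match components of $E$ with components of $L$ through a level-curve argument on $g_E$ and $g_L$.
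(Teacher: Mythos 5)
Your derivation of \eqref{eqn:absa2a1} is exactly the paper's: the unique critical point $w_1 = m_2 a_1 + m_1 a_2$ from \eqref{eqn:poly_crit_pts_gL}, evaluation of $g_L(w_1)$ via \eqref{eqn:gL}, and the identity $g_E(z_1)=g_L(w_1)$ from \eqref{eqn:gL_gE_at_critical_pts}; likewise the final step, combining $a_2-a_1=\beta$ with $m_1a_1+m_2a_2=\alpha$ from Theorem~\ref{thm:sum_mj_aj}, is the paper's argument verbatim. The only place you diverge is the intermediate claim that $a_1,a_2\in\R$ with $a_1<a_2$: the paper simply cites \cite[Thm.~2.8]{SchiefermayrSete2023}, whereas you re-derive it via the reflection trick $\tilde\Phi(z)=\conj{\Phi(\conj z)}$ and the uniqueness statement in Theorem~\ref{thm:walsh_map}. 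That argument is sound in outline --- uniqueness does force $L^*=L$ and $\Phi(\conj z)=\conj{\Phi(z)}$, and ruling out a non-real conjugate pair of centers plus ordering the real centers then follows --- but, as you yourself note, excluding the conjugate-pair case and matching $L_j$ to $E_j$ for components that are not Jordan domains requires the boundary-component correspondence (or a level-curve argument on $g_E$, $g_L$), and the ordering $a_1<a_2$ needs the additional observation that each $L_j$ is $\R$-symmetric and that $L\cap\R$ splits into two intervals separated by $w_1$; these details are precisely the content of the cited theorem, so your version is self-contained in spirit but not yet complete at that point. In short: correct, same skeleton as the paper, with one citation replaced by a plausible but only sketched symmetry argument.
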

\begin{proof}
Since $E$ and thus $L$ consist of two components, both Green's functions $g_E$ and $g_L$ have a unique critical point $z_1$ and $w_1$, respectively, and both critical points are simple; see~\cite[Thm.~2.5]{SchiefermayrSete2023}. 
By~\eqref{eqn:poly_crit_pts_gL},
\begin{equation*}
w_1 = m_2 a_1 + m_1 a_2.
\end{equation*}
Using $g_E(z_1) = g_L(w_1)$ from~\eqref{eqn:gL_gE_at_critical_pts}, $m_1 + m_2 = 1$, and~\eqref{eqn:gL}, we obtain
\begin{align*}
g_E(z_1) &= g_L(w_1)
= \log \left( \abs{m_1 (a_2-a_1)}^{m_1} \abs{m_2 (a_2-a_1)}^{m_2} \right) - \log(\capacity(E)) \\
&= \log \left( \frac{m_1^{m_1} m_2^{m_2}}{\capacity(E)} \abs{a_2 - a_1} \right),
\end{align*}
which establishes~\eqref{eqn:absa2a1}.  If $E_1^* = E_1$, $E_2^* = E_2$, then $a_1 < a_2$ by~\cite[Thm.~2.8]{SchiefermayrSete2023}, therefore $a_2-a_1 =\beta$.  Combining this with~\eqref{eqn:sum_mj_aj} yields~\eqref{eqn:a2a1_explicit}.
\end{proof}


\begin{remark}
For $\ell = 3$, the critical points $w_1, w_2$ of $g_L$ are the solutions of the quadratic equation~\eqref{eqn:poly_crit_pts_gL} and can therefore be computed explicitly as functions of $a_1,a_2,a_3$, i.e., $w_j = w_j(a_1, a_2, a_3)$ for $j = 1, 2$. Let $z_1, z_2$ be the corresponding critical points of $g_E$, then, by~\eqref{eqn:gL_gE_at_critical_pts} and~\eqref{eqn:sum_mj_aj}, we obtain the non-linear system of equations
\begin{equation} \label{eqn:sys_a1a2a3}
\begin{aligned}
m_1 \log \abs{w_1 - a_1} + m_2 \log \abs{w_1 - a_2} + m_3 \log \abs{w_1 - a_3} - \log(\capacity(E)) &= g_E(z_1) \\
m_1 \log \abs{w_2 - a_1} + m_2 \log \abs{w_2 - a_2} + m_3 \log \abs{w_2 - a_3} - \log(\capacity(E)) &= g_E(z_2) \\
m_1 a_1 + m_2 a_2 + m_3 a_3 &= \alpha,
\end{aligned}
\end{equation}
which can be solved numerically for $a_1, a_2, a_3$.
\end{remark}

For $\ell \geq 4$, it is not practical to obtain the critical points 
$w_1,\ldots, w_{\ell-1}$ explicitly as zeros of the polynomial 
in~\eqref{eqn:poly_crit_pts_gL}, which is of degree $\ell-1\geq3$.  In the next 
section, we will therefore derive an algorithm for the computation of
$a_1, \ldots, a_\ell$ in the case that $E$ consists of $\ell$ real intervals with arbitrary $\ell$, see Algorithm~\ref{algo:aj}.

\section{The Lemniscatic Domain for Several Intervals}
\label{sect:domain_for_intervals}

Let $E$ be the union of $\ell$ real intervals, $\ell \geq 1$, i.e.,
\begin{equation} \label{eqn:E_l_intervals}
E = \bigcup_{j=1}^\ell \cc{b_{2j-1}, b_{2j}}
= \cc{b_1, b_2} \cup \cc{b_3, b_4} \cup \ldots \cup \cc{b_{2\ell-1}, b_{2\ell}}
\end{equation}
with $b_1 < b_2 < \ldots < b_{2 \ell}$.
The set $\R \setminus E$ consists of the $\ell + 1$ open intervals
\begin{equation} \label{eqn:gaps}
I_0 \coloneq \oo{-\infty, b_1}, \quad I_j \coloneq \oo{b_{2j}, b_{2j+1}}
\text{ with } j = 1, \ldots, \ell-1, \quad I_\ell \coloneq \oo{b_{2\ell}, +\infty}.
\end{equation}
In what follows, the square root of the polynomial
\begin{equation} \label{eqn:H}
H(z) \coloneq \prod_{j=1}^{2 \ell} (z - b_j)
\end{equation}
plays a central role.  Therefore, let us examine the function $\sqrt{H(z)}$ for $z \in \comp{E}$ in detail.

\begin{lemma} \label{lem:sqrtH}
Let $E$ and $H$ be as in~\eqref{eqn:E_l_intervals} and~\eqref{eqn:H}.
Then, the branch of the square root such that $\sqrt{H(z)}$ behaves as 
$z^\ell$ at infinity is given by
\begin{equation}
\sqrt{H(z)} = \prod_{j=1}^{2 \ell} \sqrt{z - b_j}, \quad z \in \C \setminus E,
\end{equation}
with the principal branches of the square roots $\sqrt{z - b_j}$,
and has the following behaviour on the real line:
\begin{equation} \label{eqn:sqrtHx_real}
\sqrt{H(x)} =
(-1)^{\ell - j} \sqrt{\abs{H(x)}}, \quad x \in I_j, \quad j = 0, \ldots, \ell,
\end{equation}
and
\begin{equation}
\lim_{y \searrow 0} \sqrt{H(x \pm \ii y)} = \pm \ii (-1)^{\ell - j} 
\sqrt{\abs{H(x)}}, \quad x \in \cc{b_{2j-1}, b_{2j}}, \quad j = 1, \ldots, \ell,
\end{equation}
with the positive real root of $\sqrt{\abs{H(x)}}$.
\end{lemma}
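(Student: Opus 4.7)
My plan is to prove the three statements of the lemma by carefully tracking, for each real point $x \in \R$, the number of factors $\sqrt{z - b_k}$ whose argument lies on the branch cut of the principal square root, i.e., the number of indices $k$ with $b_k > x$.

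First, I would consider the candidate function $F(z) \coloneq \prod_{k=1}^{2\ell} \sqrt{z-b_k}$ with the principal branch of each square root. Each factor is analytic on $\C \setminus (-\infty, b_k]$, so $F$ is a priori only guaranteed to be analytic off the union of half-lines $(-\infty, b_{2\ell}]$. I would show that on the gap $I_j = (b_{2j}, b_{2j+1})$ the product has no discontinuity, while on the interval $[b_{2j-1}, b_{2j}]$ it does. For a point $x \in I_j$, the factors with $k \leq 2j$ are continuous at $x$ (since $x > b_k$), while the factors with $k \in \{2j+1, \ldots, 2\ell\}$ cross their branch cut; each such crossing contributes a factor $-1$ to the jump, giving a total jump factor of $(-1)^{2\ell-2j} = 1$. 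Hence $F$ extends analytically across each $I_j$, and in particular across $I_0 \cup I_\ell$, so $F$ is analytic on $\C \setminus E$. Taking the limit $|z| \to \infty$ along the positive real axis, each $\sqrt{z - b_k}$ behaves like $\sqrt{z}$, hence $F(z)$ behaves like $z^\ell$, identifying $F$ with the stipulated branch of $\sqrt{H}$.

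For the value on $I_j$, I would use the continuity just established to evaluate $F(x)$ as the boundary value from the upper half-plane. For $z = x + \ii \eps$ with $\eps \searrow 0$ and $x \in I_j$, the factors $\sqrt{z - b_k}$ with $k \leq 2j$ tend to $\sqrt{x - b_k} > 0$, while those with $k > 2j$ tend to $+\ii \sqrt{b_k - x}$ (principal square root, approached from above). The product is therefore $\ii^{2\ell-2j}\sqrt{|H(x)|} = (-1)^{\ell-j}\sqrt{|H(x)|}$, yielding~\eqref{eqn:sqrtHx_real}. For the third statement, an analogous computation on $x \in [b_{2j-1},b_{2j}]$ works, with the only change being that now the factor $\sqrt{z-b_{2j}}$ also tends to $\pm \ii \sqrt{b_{2j}-x}$; this adds one extra $\pm \ii$ factor, producing the claimed $\pm\ii(-1)^{\ell-j}\sqrt{|H(x)|}$.

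None of the steps is truly a hard obstacle; the only thing to watch is the parity bookkeeping. The potentially delicate point is verifying continuity across the gaps $I_0$ and $I_\ell$ (including the unbounded gap), but this follows from the same parity argument since the number of $b_k$ larger than any $x \in I_j$ is $2\ell - 2j$, which is always even. With these ingredients in place, the uniqueness of the branch behaving like $z^\ell$ at infinity identifies $F$ with $\sqrt{H}$, and the boundary values on the real line follow directly from the limit computation.
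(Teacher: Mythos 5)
Your proposal is correct and follows essentially the same route as the paper: identify the branch with the product $\prod_{k=1}^{2\ell}\sqrt{z-b_k}$ of principal square roots, use the one-sided limits of each factor $\sqrt{z-b_k}$ on the real line, and do the parity bookkeeping (an even number $2(\ell-j)$ of "crossed" factors on each gap $I_j$, an odd number on each interval $\cc{b_{2j-1},b_{2j}}$). The only cosmetic difference is that you count crossings region by region (and make the analytic continuation across the gaps explicit), whereas the paper phrases the same computation as a march along the real axis from right to left.
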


\begin{proof}
For $z > b_{2 \ell}$, the branch of $\sqrt{H(z)}$ that behaves like $z^\ell$ at 
infinity is given by $\sqrt{H(z)} = \prod_{j=1}^{2 \ell} \sqrt{z - b_j}$ with 
the principal branch of the square roots $\sqrt{z - b_j}$, which extends to $\C 
\setminus \oc{-\infty, b_{2 \ell}}$ by the identity principle.
For the principal branch of the square root, we have the limits
\begin{equation*}
\lim_{y \searrow 0} \sqrt{x \pm \ii y - b_j} =
\begin{cases}
\sqrt{\abs{x - b_j}}, & x \geq b_j, \\
\pm \ii \sqrt{\abs{x - b_j}}, & x < b_j,
\end{cases}
\end{equation*}
with the positive real roots $\sqrt{\abs{x - b_j}}$.
The assertion follows from this observation by starting with $x > b_{2\ell}$ 
and moving with $x$ on the real axis from right to left.
\end{proof}

In the following theorem, we recall the well-known formula for the Green's function of~$\comp{E}$ which seems to first appear in Widom's seminal 1969 paper~\cite[Sect.~14]{Widom1969}, see also Shen, Strang and Wathen~\cite[Sect.~3]{ShenStrangWathen2001}, as well as Peherstorfer~\cite[Sect.~2]{Peherstorfer1990}.


\begin{theorem} \label{thm:gE_intervals}
Let $E$ and $H$ be as in~\eqref{eqn:E_l_intervals} and~\eqref{eqn:H}, 
respectively.
\begin{enumerate}
\item \label{it:R}
There exists a real polynomial
\begin{equation} \label{eqn:R}
R(z) \coloneq \prod_{k=1}^{\ell-1} (z - z_k) = z^{\ell-1} + \sum_{j=0}^{\ell-2} 
r_j z^j
\end{equation}
which is uniquely determined by
\begin{equation} \label{eqn:lgs_for_R}
\int_{b_{2j}}^{b_{2j+1}} \frac{R(x)}{\sqrt{H(x)}} \, \dd x = 0, \quad j = 1, 
\ldots, \ell-1.
\end{equation}
Since $H(x) \geq 0$ for $x \in \cc{b_{2j}, b_{2j+1}}$, $j = 1, \ldots, \ell-1$, 
the positive square root is taken.

\item \label{it:green_L_intervals}
The Green's function $g_E$ of $\comp{E}$ is given by
\begin{equation} \label{eqn:gE_l_intervals}
g_E(z) = \re \biggl( \int_b^z \frac{R(\zeta)}{\sqrt{H(\zeta)}} \, \dd \zeta 
\biggr).
\end{equation}
The integration in~\eqref{eqn:gE_l_intervals} is performed along a path in $\C 
\setminus E$ from any $b \in \{ b_1, \ldots, b_{2 \ell} \}$ to $z \in \C 
\setminus E$, and the branch of $\sqrt{H(\zeta)}$ is as in
Lemma~\ref{lem:sqrtH}.

\item \label{it:equilibrium_measure_L_intervals}
The equilibrium measure of $E$, denoted by $\mu_E$, satisfies
\begin{equation}
\dd \mu_E(x) = 
\frac{1}{\pi} \frac{\abs{R(x)}}{\sqrt{\abs{H(x)}}} \, \dd x, \quad x \in E,
\end{equation}
with the positive real square root in $\sqrt{\abs{H(x)}}$, and thus
\begin{equation} \label{eqn:equilibrium_measure_one_interval}
\mu_E(\cc{b_{2j-1}, b_{2j}})
= \frac{1}{\pi} \int_{b_{2j-1}}^{b_{2j}} \frac{\abs{R(x)}}{\sqrt{\abs{H(x)}}} 
\, \dd x, \quad j = 1, \ldots, \ell.
\end{equation}

\item \label{it:mu_E_rational}
The set $E$ is a polynomial pre-image, that is, $E = P^{-1}(\cc{-1, 1})$ with a polynomial $P$ of degree $n$, if and only if $\mu_E(\cc{b_{2j-1}, b_{2j}}) = n_j/n$ for $j = 1, \ldots, \ell$ with $n_j \in \{ 1,\ldots,n \}$ and $\sum_{j=1}^n n_j = 1$.
\end{enumerate}
\end{theorem}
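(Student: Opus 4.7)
I would handle parts \ref{it:R} and \ref{it:green_L_intervals} together by directly identifying $g_E$ with the real part of
\begin{equation*}
F(z) = \int_b^z \frac{R(\zeta)}{\sqrt{H(\zeta)}}\,\dd\zeta, \qquad b \in \set{b_1,\ldots,b_{2\ell}},
\end{equation*}
with the branch of $\sqrt{H}$ fixed by Lemma~\ref{lem:sqrtH}; the single-valuedness of $\sqrt{H}$ (hence of $R/\sqrt{H}$) on $\C\setminus E$ comes straight from that lemma. The $\ell-1$ free coefficients of $R$ would be tuned so that $\re F$ is globally constant on $E$, which will turn out to be equivalent to the integral conditions \eqref{eqn:lgs_for_R}.

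\textbf{Parts \ref{it:R} and \ref{it:green_L_intervals}.} For any monic $R$ of degree $\ell-1$, I would verify three properties. First, shrinking a loop around $E_k$ onto the interval and using the boundary values from Lemma~\ref{lem:sqrtH} yields a period $-2\ii(-1)^{\ell-k}\int_{E_k} R(x)/\sqrt{\abs{H(x)}}\,\dd x$, which is purely imaginary; hence $\re F$ is single-valued on $\C\setminus E$. Second, since $\sqrt{H(x)}$ is purely imaginary on each $E_k$, tangential integration shows that $\re F$ is constant on $E_k$. Third, $R/\sqrt{H}=1/z+O(1/z^2)$ at infinity forces $\re F(z)=\log\abs{z}+O(1)$. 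The constant values of $\re F$ on successive components $E_j, E_{j+1}$ differ by $\int_{b_{2j}}^{b_{2j+1}} R(x)/\sqrt{H(x)}\,\dd x$, a real number since $\sqrt{H}$ is real on each gap, and requiring these $\ell-1$ differences to vanish is exactly \eqref{eqn:lgs_for_R}. Unique real solvability follows from a maximum-principle argument on the homogeneous system: any $S$ of degree $\le\ell-2$ satisfying the homogeneous conditions yields $\tilde u=\re\int S/\sqrt{H}$ bounded harmonic on $\widehat{\C}\setminus E$ (since $S/\sqrt{H}=O(1/z^2)$ gives $\tilde u$ a finite limit at $\infty$) and constant on $E$, hence constant everywhere, so $S/\sqrt{H}=2\partial_z\tilde u\equiv 0$ and $S\equiv 0$. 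With the resulting $R$ and $b\in E$, one has $\re F\equiv 0$ on $E$, and uniqueness of the Green's function identifies $\re F$ with $g_E$.

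\textbf{Part \ref{it:equilibrium_measure_L_intervals}.} The potential representation of $g_E$ used in the proof of Theorem~\ref{thm:mj_muE} yields $2\partial_z g_E(z)=\int_E(z-\zeta)^{-1}\,\dd\mu_E(\zeta)$, which by \eqref{eqn:wirtinger} and part \ref{it:green_L_intervals} equals $R(z)/\sqrt{H(z)}$. The Sokhotski-Plemelj jump formula for this Cauchy transform, combined with the boundary values of $\sqrt{H}$ from Lemma~\ref{lem:sqrtH}, gives a density $(-1)^{\ell-k}R(x)/(\pi\sqrt{\abs{H(x)}})$ on $E_k$. Positivity of $\mu_E$ forces $(-1)^{\ell-k}R(x)=\abs{R(x)}$ on $E_k$, and \eqref{eqn:equilibrium_measure_one_interval} follows by integration.

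\textbf{Part \ref{it:mu_E_rational} and main obstacle.} The forward direction is immediate from Theorem~\ref{thm:mu_E_rational}. For the converse, if $\mu_E(E_j)=n_j/n$ with $\sum n_j=n$, then a multi-valued harmonic conjugate $h_E$ of $g_E$ has period $2\pi n_j/n$ around any loop enclosing only $E_j$, so $\Psi(z)\coloneq\exp(n(g_E(z)+\ii h_E(z)))$ is single-valued and meromorphic on $\comp{E}$, with a pole of order $n$ at $\infty$ and $\abs{\Psi}=1$ on $E$. The Schwarz-reflection symmetry $\Psi(\conj z)=\conj{\Psi(z)}$ gives $\Psi_-=1/\Psi_+$ on each $E_j^\circ$, so $P(z)\coloneq\tfrac12(\Psi(z)+1/\Psi(z))$ has matching boundary values from the two sides, hence extends analytically across $E$; being entire with a pole of order $n$ at $\infty$, $P$ is a polynomial of degree $n$, and writing $\Psi=re^{\ii\theta}$ with $r>1$ off $E$ shows $\abs{P}>1$ there, so $P^{-1}(\cc{-1,1})=E$. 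This reverse implication in \ref{it:mu_E_rational} is the main technical obstacle, a classical but nontrivial construction (due to Peherstorfer, Totik, and others) that I would invoke by reference rather than redo in full; the rest of the theorem is a careful but routine exploitation of Lemma~\ref{lem:sqrtH}.
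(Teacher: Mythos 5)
Your proposal is correct in substance, but it takes a genuinely different route from the paper in the sense that the paper does not prove this theorem at all: parts \ref{it:R}--\ref{it:equilibrium_measure_L_intervals} are handled by citation to Widom, Shen--Strang--Wathen and Peherstorfer, and part \ref{it:mu_E_rational} by citing Theorem~\ref{thm:mu_E_rational} (forward direction) and Peherstorfer's 1993 theorem (converse). What you supply is essentially a self-contained reconstruction of those classical arguments: for \ref{it:R} and \ref{it:green_L_intervals} you use the purely imaginary periods of $R/\sqrt{H}$ around each component (via Lemma~\ref{lem:sqrtH}) to get a single-valued harmonic $\re F$, constancy on each $E_k$, the gap integrals \eqref{eqn:lgs_for_R} to equalize the constants, the $1/z$ behaviour at infinity for the logarithmic pole, and a maximum-principle argument for unique solvability of the (real, square) linear system --- this is exactly Widom's line of reasoning, and it is sound, including the identification with $g_E$ by uniqueness of the Green's function (the endpoints only contribute integrable $(\zeta-b_j)^{-1/2}$ singularities, so $\re F$ is continuous up to $\partial E$). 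For \ref{it:equilibrium_measure_L_intervals}, combining $2\partial_z g_E = \int_E (z-\zeta)^{-1}\,\dd\mu_E(\zeta) = R/\sqrt{H}$ with Stieltjes--Plemelj inversion and the boundary values of $\sqrt{H}$ recovers the density and, via positivity of $\mu_E$, the sign relation \eqref{eqn:sign_R}; this is a legitimate alternative to citing Peherstorfer. The trade-off is clear: the paper's proof is short and leans on the literature, yours is longer but makes the theorem self-contained and makes visible where each hypothesis (the gap conditions, the normalization of $R$) is used.

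One small inaccuracy in your sketch of the converse of \ref{it:mu_E_rational}: it is not true that $\abs{P} > 1$ off $E$; writing $\Psi = r e^{\ii\theta}$ with $r = e^{n g_E} > 1$, the point $P = \tfrac12(\Psi + 1/\Psi)$ lies on an ellipse whose minor semi-axis $\tfrac12(r - 1/r)$ can be arbitrarily small, so $\abs{P}$ may well be less than $1$. The correct conclusion is that the Joukowski map $w \mapsto \tfrac12(w + w^{-1})$ maps $\{\abs{w} > 1\}$ onto $\C \setminus \cc{-1,1}$, hence $P(z) \notin \cc{-1,1}$ for $z \notin E$, which is what $P^{-1}(\cc{-1,1}) = E$ requires. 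Since you (like the paper) ultimately defer this direction to Peherstorfer's theorem, this slip does not invalidate the proposal, but it should be fixed if you carry out the construction in full.
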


\begin{proof}
For~\ref{it:R} and~\ref{it:green_L_intervals}, see~\cite[Sect.~14]{Widom1969}, 
\cite[Sect.~3]{ShenStrangWathen2001}, or~\cite[Sect.~2]{Peherstorfer1990}.
For~\ref{it:equilibrium_measure_L_intervals}, 
see~\cite[Lem.~2.2\,(a)]{Peherstorfer1990}, and 
also~\cite[Thm.~8]{ShenStrangWathen2001}.
\ref{it:mu_E_rational} ``$\Rightarrow$'':  Follows immediately from 
Theorem~\ref{thm:mu_E_rational} with $\Omega = \cc{-1, 1}$; see 
also~\cite[Cor.~2.4]{Peherstorfer1991}. ``$\Leftarrow$'':  
See~\cite[Thm.~2.5]{Peherstorfer1993}.
\end{proof}

We typically use the variable $x$ for integration along the real line and 
$\zeta$ or $z$ for integration along a path in the complex plane.
The integrals in~\eqref{eqn:equilibrium_measure_one_interval} are also
called \emph{harmonic frequencies}~\cite[Sect.~3]{Mantica2013}.
In~\cite[Sect.~4]{Mantica2013}, Mantica discusses the integration of
$R(x)/\sqrt{H(x)}$ via scaling and Gauss quadrature with respect
to the Chebyshev measure.

\begin{corollary} \label{cor:intervals}
Let the notation be as in Theorem~\ref{thm:gE_intervals}.
\begin{enumerate}
\item \label{it:coefficients_R}
The coefficients $r_0, \ldots, r_{\ell-2}$ of the polynomial $R$ 
in~\eqref{eqn:R} are the unique solution of the linear algebraic system
\begin{equation} \label{eqn:coeff_R}
\sum_{k=0}^{\ell-1} r_k \int_{b_{2j}}^{b_{2j+1}} \frac{x^k}{\sqrt{H(x)}} \, \dd 
x = 0, \quad j = 1, \ldots, \ell-1,
\end{equation}
where $r_{\ell-1} = 1$.

\item \label{it:crit_pts_green}
The zeros $z_1, \ldots, z_{\ell-1}$ of $R$ are exactly the critical points of 
$g_E$ and therefore satisfy
\begin{equation} \label{eqn:inequality_zi_bj}
b_1 < b_2 < z_1 < b_3 < b_4 < z_2 < b_5 \ldots < z_{\ell-1} < b_{2\ell-1} < 
b_{2 \ell}.
\end{equation}
In particular,
\begin{equation} \label{eqn:sign_R}
R(x) = (-1)^{\ell - j} \abs{R(x)}, \quad x \in \cc{b_{2j-1}, b_{2j}}, \quad
j = 1, \ldots, \ell.
\end{equation}

\item \label{it:green_at_crit_pts}
For $j=1,\ldots,\ell-1$, we have
\begin{equation}
g_E(z_j)=\int_{b_{2j}}^{z_j} \frac{(-1)^{\ell-j}R(x)}{\sqrt{\abs{H(x)}}}\,\dd x
=-\int_{z_j}^{b_{2j+1}}\frac{(-1)^{\ell-j}R(x)}{\sqrt{\abs{H(x)}}}\,\dd x
\end{equation}
with the positive real root in $\sqrt{\abs{H(x)}}$, from which we obtain
\begin{equation}
g_E(z_j) = \frac{1}{2} \int_{b_{2j}}^{b_{2j+1}} \frac{\abs{R(x)}}{\sqrt{\abs{H(x)}}} \, \dd x.
\end{equation}

\item \label{it:cap_L_intervals}
The logarithmic capacity $\capacity(E)$ is given by
\begin{align}
\capacity(E)
&= (b_{2\ell} - \beta_1) \exp \biggl( \int_{b_{2\ell}}^{\infty} \biggl( 
\frac{1}{x - \beta_1} - \frac{R(x)}{\sqrt{H(x)}} \biggr) \, \dd x \biggr) 
\label{eqn:cap_L_intervals_1} \\
&= (\beta_2 - b_1) \exp \biggl( \int_{-\infty}^{b_1} \biggl( 
\frac{R(x)}{\sqrt{H(x)}} - \frac{1}{x - \beta_2} \biggr) \, \dd x \biggr) 
\label{eqn:cap_L_intervals_2}
\end{align}
for any $\beta_1 < b_{2\ell}$ and any $\beta_2 > b_1$, e.g., $\beta_1 = 
b_{2\ell} - 1$ and $\beta_2 = b_1 + 1$.
\end{enumerate}
\end{corollary}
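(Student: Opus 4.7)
The plan is to deduce all four parts from Theorem~\ref{thm:gE_intervals} together with the asymptotic expansion \eqref{eqn:gE_asymptotic_with_capacity}. Part \ref{it:coefficients_R} follows by substituting $R(x) = x^{\ell-1} + \sum_{k=0}^{\ell-2} r_k x^k$ into \eqref{eqn:lgs_for_R} and using linearity of the integral; unique solvability of the resulting system \eqref{eqn:coeff_R} is immediate from the uniqueness of $R$ asserted in Theorem~\ref{thm:gE_intervals}\,\ref{it:R}.

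For \ref{it:crit_pts_green}, the identity $2\partial_z g_E(z) = R(z)/\sqrt{H(z)}$, obtained from \eqref{eqn:gE_l_intervals} via \eqref{eqn:wirtinger}, identifies the critical points of $g_E$ on $\C\setminus E$ with the zeros of $R$. Since $g_E$ is continuous on $\overline{\comp{E}}$, vanishes on $\partial E$, and is strictly positive in each open gap $I_j$ for $j = 1, \ldots, \ell-1$, it attains an interior maximum on each closed gap; this produces $\ell-1$ real critical points, which exhaust the zeros of the degree-$(\ell-1)$ polynomial $R$ and yield the interlacing \eqref{eqn:inequality_zi_bj}. The sign rule \eqref{eqn:sign_R} then follows because $R$ changes sign exactly at each $z_j$ and nowhere else on $\R$.

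For \ref{it:green_at_crit_pts}, I would integrate \eqref{eqn:gE_l_intervals} along the real axis from $b_{2j}$ (where $g_E = 0$) to $z_j$, substituting $\sqrt{H(x)} = (-1)^{\ell-j}\sqrt{\abs{H(x)}}$ from Lemma~\ref{lem:sqrtH} on $I_j$ to obtain the first equality; integrating from $b_{2j+1}$ instead gives the second, and summing the two, together with the sign rule \eqref{eqn:sign_R}, yields the third.

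For \ref{it:cap_L_intervals}, the plan is asymptotic matching at infinity. From $2\partial_z g_E = R/\sqrt{H}$ and Lemma~\ref{lem:expansion_gE}, one has $R(t)/\sqrt{H(t)} = 1/t + \cO(1/t^2)$ as $t \to \pm\infty$, so the integrands in \eqref{eqn:cap_L_intervals_1} and \eqref{eqn:cap_L_intervals_2} decay like $\cO(1/t^2)$ and the improper integrals converge. For $x > b_{2\ell}$, $g_E(x) = \int_{b_{2\ell}}^x R(t)/\sqrt{H(t)}\,\dd t$; adding and subtracting $\int_{b_{2\ell}}^x \dd t/(t - \beta_1) = \log(x - \beta_1) - \log(b_{2\ell} - \beta_1)$, rearranging, and letting $x \to \infty$ in \eqref{eqn:gE_asymptotic_with_capacity} gives \eqref{eqn:cap_L_intervals_1}. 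The companion formula \eqref{eqn:cap_L_intervals_2} is derived symmetrically by integrating from large negative $x$ up to $b_1$ and using $\log(-x) - \log(\beta_2 - x) \to 0$. The main technical point throughout is consistent bookkeeping of the branches of $\sqrt{H}$ and the signs of $R$ across adjacent components of $\R \setminus E$; once this is handled, the remaining calculations are routine.
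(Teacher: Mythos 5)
Your proposal is correct and follows essentially the same route as the paper: the identity $2\partial_z g_E = R/\sqrt{H}$ for (i)--(ii), integration of \eqref{eqn:gE_l_intervals} along the gaps with the branch from Lemma~\ref{lem:sqrtH} for (iii), and the limit $\log(\capacity(E)) = \lim_{x\to\pm\infty}(\log\abs{x} - g_E(x))$ with the $1/(x-\beta)$ regularization for (iv). The only difference is that where the paper defers (i)--(iii) to Widom, Shen--Strang--Wathen, Peherstorfer and \cite[Thm.~2.8]{SchiefermayrSete2023}, you supply the standard arguments directly (interior maximum of $g_E$ in each gap, monic $R$ anchoring the sign alternation), which is fine.
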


\begin{proof}
\ref{it:coefficients_R} follows immediately from~\eqref{eqn:lgs_for_R}; 
see~\cite[pp.~225--226]{Widom1969} or~\cite[Thm.~3]{ShenStrangWathen2001}.

\ref{it:crit_pts_green}
We get $2 \partial_z g_E(z) = R(z) / \sqrt{H(z)}$ 
from~\eqref{eqn:gE_l_intervals} and~\eqref{eqn:wirtinger}, hence the critical 
points of $g_E$ are precisely the zeros of $R$.  
Equation~\eqref{eqn:inequality_zi_bj}
is a conclusion of the behaviour of $g_E$ on the real 
line; see~\cite[p.~226]{Widom1969} or~\cite[p.~74]{ShenStrangWathen2001}.
A proof for more general sets of the form $E = E_1 \cup \ldots \cup E_\ell$
with $E_j^* = E_j$ is given in~\cite[Thm.~2.8\,(i)]{SchiefermayrSete2023}.

\ref{it:green_at_crit_pts} follows from~\eqref{eqn:gE_l_intervals} and~\eqref{eqn:sqrtHx_real},
see also Widom~\cite[pp.~226--227]{Widom1969}.

\ref{it:cap_L_intervals} follows from $\log(\capacity(E)) = \lim_{z \to \infty} 
(\log \abs{z} - g_E(z))$, where the limit can be taken along the real line, 
either $z \to +\infty$ or $z \to -\infty$.
Let us consider the first formula.  By~\eqref{eqn:gE_l_intervals} and for any 
$\beta_1 < b_{2\ell}$, we then have
\begin{equation*}
\log(\capacity(E)) = \ln(b_{2 \ell} - \beta_1) - \int_{b_{2 \ell}}^\infty 
\biggl( \frac{R(x)}{\sqrt{H(x)}} - \frac{1}{x - \beta_1} \biggr) \, \dd x
\end{equation*}
which implies~\eqref{eqn:cap_L_intervals_1}.
Analogously, formula~\eqref{eqn:cap_L_intervals_2} is proved.
Note that~\eqref{eqn:cap_L_intervals_1} has also been derived 
in~\cite[Eqn.~(16)]{DubininKarp2011} in the case $b_{2\ell} = 1$ and $\beta_1 = 
0$, and~\eqref{eqn:cap_L_intervals_2} has also been derived 
in~\cite[p.~226]{Widom1969} for $\beta_2 = b_1 + 1$.
\end{proof}

The logarithmic capacity of a union of real intervals can also be computed using
theta functions~\cite{BogatyrevGrigoriev2017}
or via Schwarz-Christoffel maps~\cite[p.~751]{EmbreeTrefethen1999}.


The next result is a conclusion of Theorem~\ref{thm:mj_muE} and Theorem~\ref{thm:gE_intervals}\,\ref{it:equilibrium_measure_L_intervals}.
Since we consider the result of great importance, we formulate it rather as
a theorem than as a corollary.
In Appendix~\ref{sect:appendix}, we give an alternative proof,
which uses the representations~\eqref{eqn:gE_l_intervals} 
and~\eqref{eqn:equilibrium_measure_one_interval}
of $g_E$ and $\mu_E(\cc{b_{2j-1}, b_{2j}})$, respectively.

\begin{theorem}\label{thm:mj_muE_intervals}
Let $E$ be as in~\eqref{eqn:E_l_intervals} and let $\mu_E$ be the equilibrium 
measure of $E$, then the exponents $m_1, \ldots, m_\ell$ in the lemniscatic 
domain corresponding to $E$ are given by
\begin{equation} \label{eqn:mj_by_muE_for_intervals}
m_j = \mu_E(\cc{b_{2j-1}, b_{2j}})
= \frac{1}{\pi} \int_{b_{2j-1}}^{b_{2j}} \frac{\abs{R(x)}}{\sqrt{\abs{H(x)}}} 
\, \dd x, \quad j = 1, \ldots, \ell.
\end{equation}
\end{theorem}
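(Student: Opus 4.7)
The cleanest route is to observe that the equality $m_j=\mu_E(E_j)$ has already been established in full generality in Theorem~\ref{thm:mj_muE}, and that the explicit integral expression for $\mu_E([b_{2j-1},b_{2j}])$ is exactly Theorem~\ref{thm:gE_intervals}\ref{it:equilibrium_measure_L_intervals}. Concatenating these two identities yields \eqref{eqn:mj_by_muE_for_intervals} at once, and this is the proof I would record in the main text.

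For the alternative derivation promised for the appendix, I would bypass the equilibrium measure and go directly through the contour representation \eqref{eqn:mj_with_integral}, namely $m_j=\frac{1}{2\pi\ii}\int_{\gamma_j}2\partial_z g_E(z)\,\dd z$, together with the explicit form $2\partial_z g_E(z)=R(z)/\sqrt{H(z)}$ noted in the proof of Corollary~\ref{cor:intervals}\ref{it:crit_pts_green}. The plan is to shrink $\gamma_j$ onto the real segment $[b_{2j-1},b_{2j}]$, collapsing it to two overlapping integrals along the top and bottom shores of that cut. Since $\sqrt{H(z)}$ is analytic off $E$, only the boundary values of $\sqrt{H}$ on this segment contribute.

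The key ingredient at this point is Lemma~\ref{lem:sqrtH}, which gives $\lim_{y\searrow 0}\sqrt{H(x\pm\ii y)}=\pm\ii(-1)^{\ell-j}\sqrt{|H(x)|}$ for $x\in[b_{2j-1},b_{2j}]$; orienting $\gamma_j$ counterclockwise (right-to-left above, left-to-right below) the two shore contributions add rather than cancel and produce a factor $-2/(\ii(-1)^{\ell-j})$. Combining this with the sign information $R(x)=(-1)^{\ell-j}|R(x)|$ on $[b_{2j-1},b_{2j}]$ from \eqref{eqn:sign_R} collapses the $(-1)^{\ell-j}$ factors and leaves
\begin{equation*}
m_j = \frac{1}{2\pi\ii}\cdot 2\ii \int_{b_{2j-1}}^{b_{2j}} \frac{|R(x)|}{\sqrt{|H(x)|}}\,\dd x,
\end{equation*}
which is \eqref{eqn:mj_by_muE_for_intervals}.

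The only place where care is really needed is the bookkeeping of signs and orientations in the contour collapse: getting the branch of $\sqrt{H}$ on each shore correct (here Lemma~\ref{lem:sqrtH} does the work), matching it against the alternating sign of $R$ from \eqref{eqn:sign_R}, and tracking the orientation of $\gamma_j$ so that the two shore contributions reinforce. Once these three pieces are aligned, the identity is essentially immediate; the integrability of $|R(x)|/\sqrt{|H(x)|}$ near the endpoints $b_{2j-1},b_{2j}$ is a standard inverse-square-root singularity and justifies the limit of the contour-shrinking argument.
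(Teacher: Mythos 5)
Your proposal is correct and follows the paper exactly: the short derivation you would put in the main text (combining Theorem~\ref{thm:mj_muE} with Theorem~\ref{thm:gE_intervals}\,\ref{it:equilibrium_measure_L_intervals}) is precisely how the paper justifies the theorem, and your contour-collapse argument via~\eqref{eqn:mj_with_integral}, Lemma~\ref{lem:sqrtH}, and~\eqref{eqn:sign_R} is the same as the paper's alternative proof in the appendix, including the vanishing of the small arcs at the endpoints and the resulting factor $2\ii\abs{R(x)}/\sqrt{\abs{H(x)}}$. Your sign and orientation bookkeeping agrees with the paper's computation.
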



If $E$ is the union of $\ell$ real intervals, we can express the coefficient 
$\alpha$ in~\eqref{eqn:wirtinger_derivative_gE} by the endpoints of the 
intervals and the critical points of $g_E$.


\begin{theorem} \label{thm:sum_mj_aj_ell_intervals}
Let $E$ be as in~\eqref{eqn:E_l_intervals} and let $z_1, 
\ldots, z_{\ell-1}$ be the critical points of the Green's function $g_E$, then
\begin{equation} \label{eqn:sum_mj_aj_ell_intervals}
\sum_{j=1}^\ell m_j a_j = \alpha
= \frac{1}{2} \sum_{j=1}^{2 \ell} b_j - \sum_{j=1}^{\ell-1} z_j.
\end{equation}
\end{theorem}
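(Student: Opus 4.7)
The plan is to combine Theorem~\ref{thm:sum_mj_aj} with a direct Laurent expansion. The first equality $\sum_{j=1}^\ell m_j a_j = \alpha$ is exactly Theorem~\ref{thm:sum_mj_aj} and nothing more is needed for it; so the real work is to evaluate the coefficient $\alpha$ from Lemma~\ref{lem:expansion_gE} in terms of the endpoints $b_j$ and critical points $z_k$. The bridge between these two quantities is the explicit closed-form expression for $2\partial_z g_E$ which, for the several-interval case, is supplied by the proof of Corollary~\ref{cor:intervals}\,\ref{it:crit_pts_green}, namely
\begin{equation*}
2\partial_z g_E(z) = \frac{R(z)}{\sqrt{H(z)}}, \quad z \in \C \setminus E,
\end{equation*}
with $R(z) = \prod_{k=1}^{\ell-1}(z - z_k)$ and $H(z) = \prod_{j=1}^{2\ell}(z - b_j)$, and the branch of $\sqrt{H(z)}$ chosen as in Lemma~\ref{lem:sqrtH}, i.e., behaving like $+z^\ell$ at infinity.

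The next step is to expand both factors around $z = \infty$. The polynomial $R$ is monic of degree $\ell-1$ with subleading coefficient $-\sum_{k=1}^{\ell-1}z_k$, so $R(z) = z^{\ell-1}\bigl(1 - (\sum_k z_k)/z + \cO(z^{-2})\bigr)$; similarly $H(z) = z^{2\ell}\bigl(1 - (\sum_j b_j)/z + \cO(z^{-2})\bigr)$, and extracting the chosen branch of the square root yields
\begin{equation*}
\sqrt{H(z)} = z^\ell \biggl( 1 - \frac{1}{2z}\sum_{j=1}^{2\ell} b_j + \cO(z^{-2}) \biggr).
\end{equation*}
Multiplying the expansion of $R$ by the reciprocal of this expansion and collecting the first two nonzero terms gives
\begin{equation*}
\frac{R(z)}{\sqrt{H(z)}} = \frac{1}{z} + \frac{1}{z^2}\biggl( \frac{1}{2}\sum_{j=1}^{2\ell} b_j - \sum_{k=1}^{\ell-1} z_k \biggr) + \cO(z^{-3}).
\end{equation*}
Matching this with the defining expansion~\eqref{eqn:wirtinger_derivative_gE} of $\alpha$ from Lemma~\ref{lem:expansion_gE} reads off $\alpha = \tfrac{1}{2}\sum_{j=1}^{2\ell} b_j - \sum_{k=1}^{\ell-1} z_k$, and combining with Theorem~\ref{thm:sum_mj_aj} finishes the proof.

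I do not expect a substantial obstacle: the argument is an elementary coefficient comparison once the formula $2\partial_z g_E = R/\sqrt{H}$ is in hand. The only point to check carefully is that the branch specified in Lemma~\ref{lem:sqrtH} indeed produces the leading term $+1/z$ (and not $-1/z$) in the expansion, so that the $1/z^2$ coefficient is identified with $\alpha$ rather than $-\alpha$; this is automatic from the normalization $\sqrt{H(z)} \sim +z^\ell$ at infinity, which is precisely how that branch is defined.
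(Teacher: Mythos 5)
Your proposal is correct and follows essentially the same route as the paper: both identify $2\partial_z g_E = R/\sqrt{H}$, expand $R$ and $\sqrt{H}$ at infinity (the paper writes the denominator as $z^\ell\sqrt{1 - z^{-1}\sum_j b_j + \cO(z^{-2})}$ and uses $1/\sqrt{1-x} = 1 + x/2 + \cO(x^2)$, which is the same computation as your reciprocal of the square-root expansion), read off $\alpha = \tfrac{1}{2}\sum_j b_j - \sum_k z_k$, and invoke Theorem~\ref{thm:sum_mj_aj}. Your remark about the branch normalization $\sqrt{H(z)} \sim +z^\ell$ guaranteeing the leading coefficient $+1/z$ is a correct and sensible check.
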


\begin{proof}
First, let us recall that the critical points $z_1, \ldots, z_{\ell-1}$ are 
located in the $\ell-1$ gaps of $E$, see~\eqref{eqn:inequality_zi_bj} 
or~\cite[Thm.~2.8\,(i)]{SchiefermayrSete2023}.  By 
formula~\eqref{eqn:gE_l_intervals} and~\eqref{eqn:wirtinger}, we obtain
\begin{align*}
2 \partial_z g_E(z) &= \frac{\prod_{j=1}^{\ell-1} (z - 
z_j)}{\sqrt{\prod_{j=1}^{2\ell} (z - b_j)}}
= \frac{z^{\ell-1} - z^{\ell-2} \sum_{j=1}^{\ell-1} z_j + 
\cO(z^{\ell-3})}{\sqrt{z^{2\ell} - z^{2 \ell-1} \sum_{j=1}^{2 \ell} b_j + 
\cO(z^{2 \ell-2})}} \\
&= \frac{z^{\ell-1} - z^{\ell-2} \sum_{j=1}^{\ell-1} z_j + 
\cO(z^{\ell-3})}{z^{\ell} \sqrt{1 - z^{-1} \sum_{j=1}^{2 \ell} b_j + 
\cO(z^{-2})}} = (*)
\end{align*}
Since $1 / \sqrt{1 - x} = 1 + x/2 + \cO(x^2)$, we further obtain
\begin{align*}
(*) &= \biggl( z^{-1} - z^{-2} \sum_{j=1}^{\ell-1} z_j + \cO(z^{-3}) \biggr)
\biggl( 1 + z^{-1} \frac{1}{2} \sum_{j=1}^{2 \ell} b_j + \cO(z^{-2}) \biggr) \\
&= z^{-1} + z^{-2} \biggl( \frac{1}{2} \sum_{j=1}^{2 \ell} b_j - 
\sum_{j=1}^{\ell-1} z_j \biggr) + \cO(z^{-3}).
\end{align*}
The assertion now follows from Theorem~\ref{thm:sum_mj_aj}  
and~\eqref{eqn:wirtinger_derivative_gE}.
\end{proof}


\begin{remark} \label{rem:two_intervals}
Consider the union of $\ell = 2$ intervals,
\begin{equation*}
E = \cc{b_1, b_2} \cup \cc{b_3, b_4}, \quad b_1 < b_2 < b_3 < b_4,
\end{equation*}
and let $H(z) = (z-b_1) (z-b_2) (z-b_3) (z-b_4)$ be as in~\eqref{eqn:H}.
By Theorem~\ref{thm:gE_intervals}\,\ref{it:R}, the polynomial $R$ has the form 
$R(z) = z - z_1$, where
\begin{equation*}
z_1 = \int_{b_2}^{b_3} \frac{x}{\sqrt{H(x)}} \, \dd x \Bigg/ \int_{b_2}^{b_3} 
\frac{1}{\sqrt{H(x)}} \, \dd x.
\end{equation*}
By Corollary~\ref{cor:intervals}, the logarithmic capacity of $E$ is
\begin{equation*}
\capacity(E)
= \exp \biggl( \int_{b_4}^{\infty} \biggl( 
\frac{1}{x - b_4 + 1} - \frac{R(x)}{\sqrt{H(x)}} \biggr) \, \dd x \biggr).
\end{equation*}
By Theorem~\ref{thm:mj_muE_intervals}, the exponents are given by
\begin{equation*}
m_1 = \frac{1}{\pi} \int_{b_1}^{b_2} \frac{-(x-z_1)}{\sqrt{\abs{H(x)}}} \, \dd 
x, \quad
m_2 = \frac{1}{\pi} \int_{b_3}^{b_4} \frac{x-z_1}{\sqrt{\abs{H(x)}}} \, \dd x
= 1 - m_1.
\end{equation*}
By Theorem~\ref{thm:sum_mj_aj_ell_intervals},
\begin{equation*}
\alpha = \frac{1}{2} (b_1 + b_2 + b_3 + b_4) - z_1,
\end{equation*}
and, by Theorem~\ref{thm:a1a2},
\begin{equation*}
\beta = \frac{\capacity(E)}{m_1^{m_1} m_2^{m_2}} \ee^{g_E(z_1)}
\quad \text{with} \quad
g_E(z_1) = \frac{1}{2} \int_{b_2}^{b_3} \frac{\abs{R(x)}}{\sqrt{H(x)}} \, \dd x,
\end{equation*}
and the centers of $L$ are given by
\begin{equation*}
a_1 = \alpha - m_2 \beta, \quad
a_2 = \alpha + m_1 \beta.
\end{equation*}
\end{remark}


\begin{example}\label{ex:two_intervals}
Let $E = \cc{-1, -0.3} \cup \cc{0.1, 1}$.  We evaluate the formulas in Remark~\ref{rem:two_intervals} using Mathematica and Matlab and obtain (numbers truncated after $5$ decimal places)
\begin{align*}
z_1 &= -0.10209\ldots, & m_1 &= 0.46710\ldots, & m_2 &= 0.53289\ldots,\\
\alpha &= 0.00209\ldots, & g_E(z_1) &= 0.20383\ldots,& \capacity(E) &= 0.48978\ldots,\\
\beta &= 1.19846\ldots,& a_1 &= -0.63655\ldots,& a_2 &= 0.56190\ldots,
\end{align*}
so that all parameters of the set $L$ are obtained. The corresponding conformal map $\Phi : \comp{E} \to \comp{L}$ is obtained in Example~\ref{ex:two_intervals_continued}.
\end{example}


Since the set $E$ in~\eqref{eqn:E_l_intervals} is symmetric with respect to
the real line, the centers $a_1, \ldots, a_\ell$ of $L$ and the critical points
$w_1, \ldots, w_{\ell-1}$ of $g_L$ (see the beginning of Section~\ref{sect:general_results}) are real and satisfy
\begin{equation} \label{eqn:interlacing_aj_wj}
a_1<w_1<a_2<w_2<a_3<\ldots<a_{\ell-1}<w_{\ell-1}<a_{\ell};
\end{equation}
see~\cite[Thm.~2.1\,(v)]{SchiefermayrSete-II}. By~\cite[Thm.~2.1\,(iv) and (v)]{SchiefermayrSete-II}, we have the correspondence $w_i = \Phi(z_i)$, with the orderings~\eqref{eqn:inequality_zi_bj} and~\eqref{eqn:interlacing_aj_wj}.

For $\ell = 3$ intervals, the centers $a_1, a_2, a_3$ can be computed by solving the non-linear system of equations~\eqref{eqn:sys_a1a2a3}, see Example~\ref{ex:three_intervals} below.

For $\ell \geq 4$, the critical points $w_1, \ldots, w_{\ell-1}$ of the
Green's function $g_L$ cannot be given in an adequate explicit form in terms of
$a_1, \ldots, a_\ell$, since the polynomial in~\eqref{eqn:poly_crit_pts_gL} has degree $\ell-1 \geq 3$.  Therefore, generalizing~\cite[Alg.~5.1]{SchiefermayrSete-II}, we derive an algorithm for numerically computing $a_1, \ldots, a_\ell$ and $w_1, \ldots, w_{\ell-1}$ for general $\ell$, where we use equations~\eqref{eqn:gL_gE_at_critical_pts} and~\eqref{eqn:sum_mj_aj_ell_intervals}.

\pagebreak
\begin{algorithm} \label{algo:aj}~ \\
\textbf{Input:} Endpoints $b_1, \ldots, b_{2 \ell}$ of $E = \cup_{j=1}^\ell 
\cc{b_{2j-1}, b_{2j}}$, exponents $m_1, \ldots, m_\ell$, critical points $z_1, \ldots, z_{\ell-1}$ of $g_E$,
absolute and relative tolerances $\operatorname{abstol}$ and $\operatorname{reltol}$. \\
\textbf{Output:} Centers $a_1, \ldots, a_\ell$ of $L$ and critical points $w_1, 
\ldots, w_{\ell-1}$ of $g_L$.

\noindent Initial values:
\begin{equation*}
\begin{aligned}
a_j^{[0]}&=\tfrac{1}{2}(b_{2j-1}+b_{2j}), \qquad j=1,\ldots,\ell \\
w_j^{[0]}&=\tfrac{1}{2}(b_{2j}+b_{2j+1}), \qquad j = 1, \ldots, \ell-1.
\end{aligned}
\end{equation*}
FOR $k = 0, 1, 2, \ldots$ DO
\begin{enumerate}
\renewcommand{\labelenumi}{\arabic{enumi}.}
\item Compute $a_1^{[k+1]} < \ldots < a_{\ell}^{[k+1]}$ such that
\begin{equation} \label{eqn:algorithm}
\begin{aligned}
g_L(w_i^{[k]}) &= g_E(z_i), \quad i = 1, \ldots, \ell-1,\\
\sum_{j=1}^{\ell} m_j a_j^{[k+1]} &= \frac{1}{2} \sum_{j=1}^{2\ell} b_j - 
\sum_{j=1}^{\ell-1} z_j,
\end{aligned}
\end{equation}

\item Compute the solutions $w_1^{[k+1]} < \ldots < w_{\ell-1}^{[k+1]}$ 
of the equation
\begin{equation*}
\sum_{j=1}^\ell m_j \prod_{\substack{i=1 \\ i \neq j}}^\ell \Bigl( 
w-a_i^{[k+1]} \Bigr) = 0.
\end{equation*}

\item Stop if $\bigl\lvert a_j^{[k+1]} - a_j^{[k]} \bigl\lvert < 
\operatorname{abstol} + \operatorname{reltol} \cdot \bigl\lvert a_j^{[k]} 
\bigl\lvert$ for all $j = 1, \ldots, \ell$.
\end{enumerate}
ENDFOR
\end{algorithm}


\begin{remark}
\begin{enumerate}
\item When solving the (non-linear) system of equations~\eqref{eqn:algorithm} with an iterative method like Newton's method, one can use $a_1^{[k]}, \ldots, a_\ell^{[k]}$ as initial values.
\item In all our examples, we used $\operatorname{abstol} = \operatorname{reltol} = 10^{-13}$ in Algorithm~\ref{algo:aj}.
\item Algorithm~\ref{algo:aj} extends to sets $E = \cup_{j=1}^\ell E_j$ consisting of $\ell$ components that are each symmetric with respect to the real line, i.e., with $E_j^* = E_j$ for $j = 1, \ldots, \ell$. In that case, $E_j \cap \R$ is an interval or a point and we define $b_1,\ldots, b_{2 \ell}$ by $E_j \cap \R = \cc{b_{2j-1}, b_{2j}}$; compare~\cite[Rem.~5.2]{SchiefermayrSete-II}. Then one can run the above algorithm, provided that one can obtain the exponents $m_1, \ldots, m_\ell$, the coefficient $\alpha$ in Theorem~\ref{thm:sum_mj_aj}, and the values $g_E(z_j)$ of the Green's function of $\comp{E}$ at its critical points.
\end{enumerate}
\end{remark}

\begin{example} \label{ex:analytic}
We consider the following three families of sets $E$ consisting of real 
intervals for which the parameters of the lemniscatic domain corresponding to 
$\comp{E}$ are known explicitly.
\begin{enumerate}
\item Two symmetric intervals: $E = [-b_4, -b_3] \cup [b_3, b_4]$ with $0 < b_3 
< b_4$ for which $m_1 = m_2 = 1/2$ and $a_2 = (b_3 + b_4)/2$, $a_1 = -a_2$, and 
$\capacity(E) = \sqrt{b_4^2 - b_3^2} / 2$; see~\cite[Cor.~3.3]{SeteLiesen2016} 
or~\cite[Ex.~3.5]{SchiefermayrSete-II}.

\item Two intervals: $E = \cc{-1, b_2} \cup \cc{b_3, 1}$ with
$b_2 = \frac{1}{2} (1 - \alpha^2) - \alpha$,
$b_3 = \frac{1}{2} (1 - \alpha^2) + \alpha$
for $0 < \alpha < 1$; see \cite[Ex.~3.2]{SchiefermayrSete-II}.

\item Three symmetric intervals: $E = \cc{-1, -(1 - \alpha)} \cup 
\cc{-\alpha, \alpha} \cup \cc{1-\alpha, 1}$
with $0 < \alpha < 1/2$; see~\cite[Ex.~4.3]{SchiefermayrSete-II}.
\end{enumerate}
We applied Algorithm~\ref{algo:aj} to these three examples.  
Table~\ref{tab:analytic_examples} displays the number of iteration steps until 
convergence, the maximal error $\max_{j=1, \ldots, \ell} \abs{a_j^{[k]} - 
a_j}$ in the final step of the iteration, as well as the maximal error for 
$m_1, \ldots, m_\ell$.
We observe that the algorithm returns very accurate 
approximations of $a_1, \ldots, a_\ell$.
Figure~\ref{fig:analytic_examples} shows the error curves $\abs{a_j^{[k]} - 
a_j}$ in examples (ii) and (iii),
which underlines the very fast convergence of Algorithm~\ref{algo:aj},
which seems to be quadratic in these examples.
Only $4$ iteration steps are needed until convergence, and the
same observations holds for the examples in Section~\ref{sect:map}
with $3, 4, 8$ or $10$ intervals, where the prescribed tolerance is
achieved in at most~$5$ iteration steps.

In all three examples, the set $E$ is a polynomial pre-image of $\cc{-1, 1}$,
i.e., has the form $E = P^{-1}(\cc{-1, 1})$ where $P$ is a polynomial.
Note that we ran Algorithm~\ref{algo:aj} only with the endpoints $b_1, \ldots, 
b_{2 \ell}$ as inputs, without any information on $P$.
In contrast, the earlier method in~\cite{SchiefermayrSete-II} is limited
to polynomial pre-images and requires knowledge of the polynomial $P$.
\end{example}

\begin{table}[t]
{\centering
\begin{tabular}{lccc}
\toprule
set $E$ & iter. steps & max. error $\abs{a_j^{[k]} - a_j}$ & max. error for 
$m_j$ \\
\midrule
(i) for $b_3 = 1$, $b_4 = 2$ & $1$ & $3.5660 \cdot 10^{-13}$ & $1.9479 \cdot 
10^{-13}$\\
(ii) with $\alpha = 0.05$ & $4$ & $3.4994 \cdot 10^{-13}$ & $5.9341 \cdot 
10^{-14}$ \\
(iii) with $\alpha = 0.4$ & $4$ & $2.7023 \cdot 10^{-13}$ & $4.3743 \cdot 
10^{-14}$ \\
\bottomrule
\end{tabular}

}
\caption{Applying Algorithm~\ref{algo:aj} to the sets in 
Example~\ref{ex:analytic}: number of iteration steps until convergence, 
and final maximal error between the computed and exact values of $a_1, \ldots, 
a_\ell$ and $m_1, \ldots, m_\ell$.}
\label{tab:analytic_examples}
\end{table}

\begin{figure}
{\centering
\includegraphics[width=0.48\linewidth]{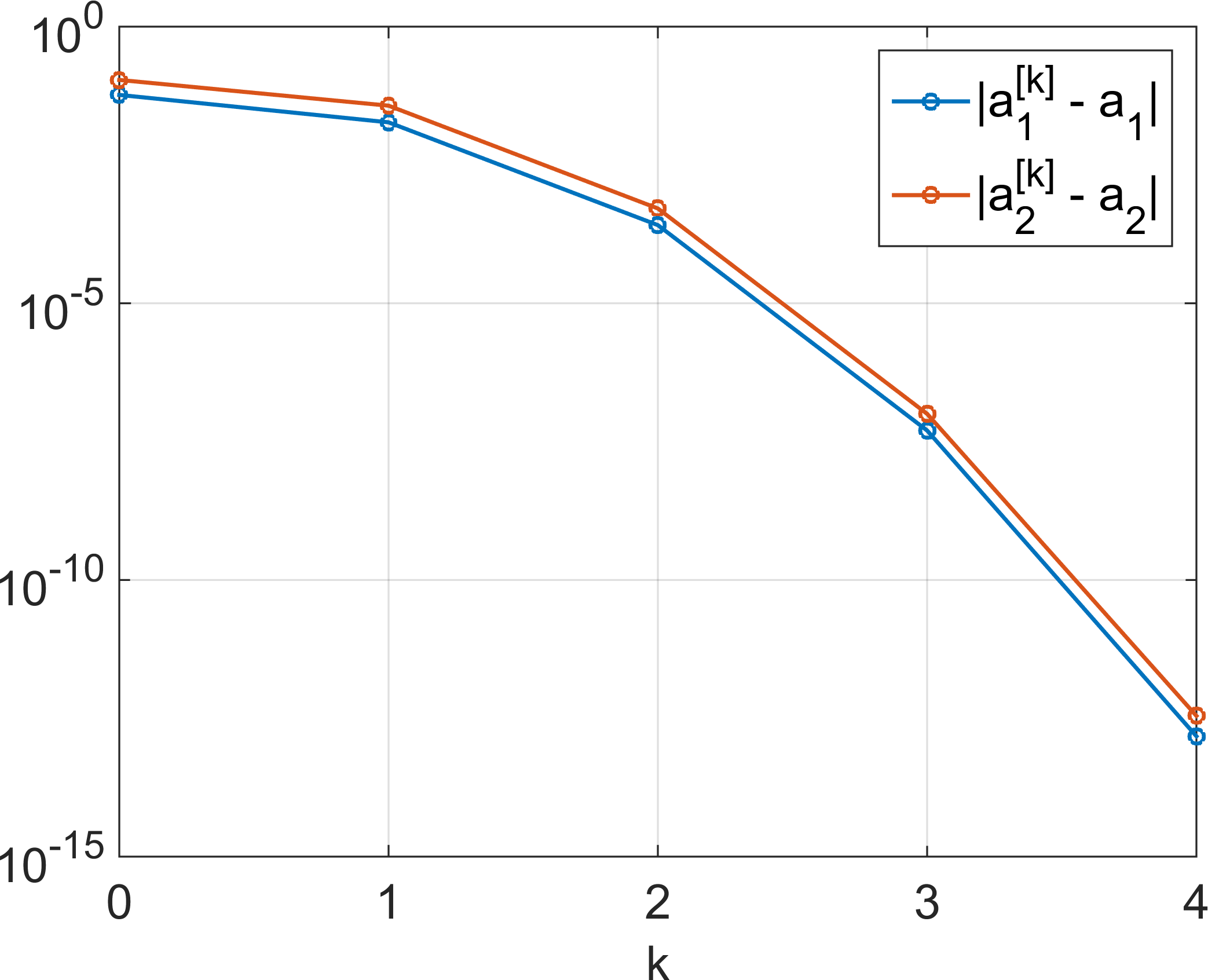}
\includegraphics[width=0.48\linewidth]{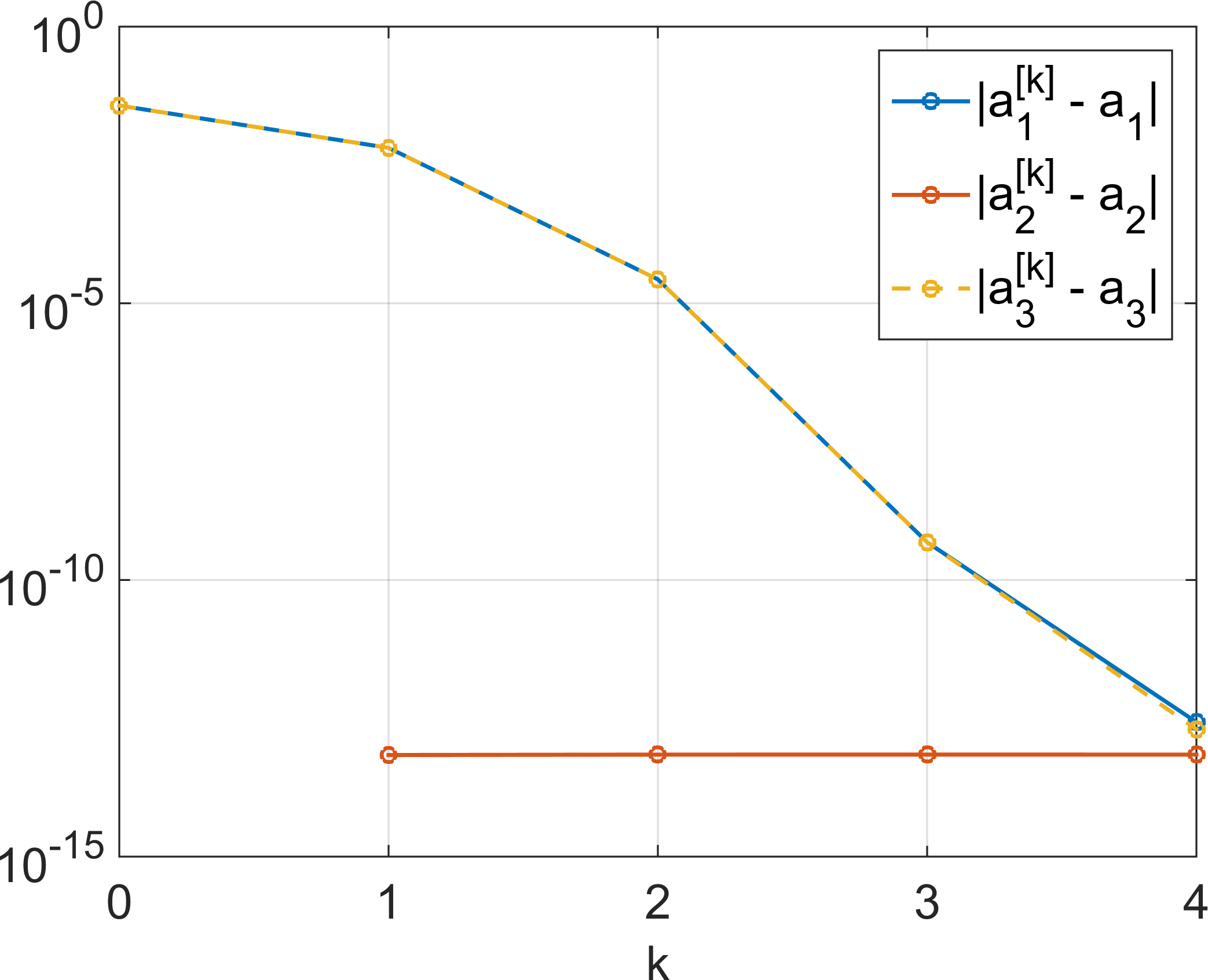}

}
\caption{Error curves $\abs{a_j^{[k]} - a_j}$ in Algorithm~\ref{algo:aj}
applied to the sets $E$ in Example~\ref{ex:analytic} (ii) (left) and 
(iii) (right).
Missing dots mean that the error is exactly zero.}
\label{fig:analytic_examples}
\end{figure}

\begin{example} \label{ex:rand_intervals}
As a numerical experiment, we considered $500$ examples 
with random endpoints generating $5$ and $10$ intervals, respectively.
In most cases, Algorithm~\ref{algo:aj} converged in $4$ or $5$ steps,
and in all cases in at most $7$ steps.
\end{example}

\pagebreak
\section{The Conformal Map for Several Intervals}\label{sect:map}

Let $E$ be the union of $\ell$ disjoint real intervals given 
in~\eqref{eqn:E_l_intervals}. The logarithmic capacity $\capacity(E)$,
the exponents $m_1, \ldots, m_\ell$ and the centers $a_1, \ldots, a_\ell$ 
of the canonical domain $\comp{L}$ have been obtained in Section~\ref{sect:domain_for_intervals}.
Next, we address the
computation of the conformal map $\Phi : \comp{E} \to \comp{L}$.

For the computation of $\Phi(z)$ for real $z$, we need the following notation.
The set $\R \setminus E$ consists of $\ell+1$ open intervals,
denoted by $\R \setminus E = I_0 \cup \ldots \cup I_\ell$, see~\eqref{eqn:gaps}.
By~\cite[Thm.~2.1\,(iv)]{SchiefermayrSete-II}, the set $\R \setminus L$ consists of $\ell+1$ open intervals
\begin{equation} \label{eqn:intervals_Jk}
J_0 = \oo{-\infty, c_1}, \quad
J_k = \oo{c_{2k}, c_{2k+1}} \quad \text{for } k = 1, \ldots, \ell-1, \quad
J_\ell = \oo{c_{2 \ell}, \infty},
\end{equation}
where
\begin{equation} \label{eqn:Phi_Ik_Jk}
\Phi(I_k) = J_k, \quad k = 0, \ldots, \ell,
\end{equation}
and $\Phi$ is strictly increasing
on~$\R \setminus E$.  In particular, $\Phi(b_j) = c_j \in \partial L$, $j = 1, \ldots, 2 \ell$, and
\begin{equation} \label{eqn:inequality_cj_aj}
-\infty < c_1 < a_1 < c_2 < c_3 < a_2 < c_4 < c_5 < a_3 < \ldots <
c_{2 \ell} < + \infty.
\end{equation}

Our first result extends the identity $g_E(z) = g_L(\Phi(z))$, see~\eqref{eqn:gE_gL}, to the \emph{complex} Green's functions.

\begin{theorem} \label{thm:Phi_eqn}
Let $E$ be as in~\eqref{eqn:E_l_intervals} and $I_0, \ldots, I_\ell$ be as in~\eqref{eqn:gaps}, then the following hold.
\begin{enumerate}
\item \label{it:Phi_complex}
For $z \in \C \setminus \oc{-\infty, b_{2 \ell}}$,
\begin{equation} \label{eqn:Phi_complex}
\sum_{j=1}^\ell m_j \log(\Phi(z) - a_j) - \log(\capacity(E))
= \int_{b_{2 \ell}}^z \frac{R(\zeta)}{\sqrt{H(\zeta)}} \, \dd \zeta
\end{equation}
with the principal branch of the logarithm and an integration path from
$b_{2\ell}$ to $z$ that lies in $\C \setminus \oc{-\infty, b_{2 \ell}}$
except for its starting point.

\item \label{it:Phi_real_z}
For $z \in I_k$, $k = 0, \ldots, \ell$,
\begin{equation} \label{eqn:Phi_real}
\sum_{j=1}^\ell m_j \log \abs{\Phi(z)-a_j} - \log(\capacity(E))
= \int_b^z \frac{R(x)}{\sqrt{H(x)}} \, \dd x,
\end{equation}
where $\log$ is the real logarithm, the integration is along the real line and 
$b \in \{ b_1, \ldots, b_{2 \ell} \}$ is an endpoint of $I_k$.
\end{enumerate}
\end{theorem}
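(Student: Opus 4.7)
The plan for part~\ref{it:Phi_complex} is to show that both sides of~\eqref{eqn:Phi_complex} are analytic on the simply connected domain $D \coloneq \C \setminus \oc{-\infty, b_{2\ell}}$, agree in real part via the identity $g_E = g_L \circ \Phi$ from~\eqref{eqn:gE_gL}, and hence differ by a purely imaginary constant, which I will then pin down to zero by restricting to the ray $I_\ell = \oo{b_{2\ell}, \infty} \subset D$ where both sides are real.

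First I would verify that the principal logarithms on the left-hand side are well-defined on $D$. Since $E$ is symmetric with respect to the real axis, so is $L$, giving $a_1, \ldots, a_\ell \in \R$; moreover, by the Schwarz reflection principle $\Phi$ satisfies $\Phi(\conj{z}) = \conj{\Phi(z)}$ and maps the open upper (resp.\ lower) half-plane into itself. Hence for $z \in D$ with $\im(z) > 0$, $\Phi(z) - a_j$ lies in the open upper half-plane and avoids $\oc{-\infty, 0}$; the case $\im(z) < 0$ is analogous. For $z \in I_\ell$, \eqref{eqn:Phi_Ik_Jk} gives $\Phi(z) \in J_\ell$, so $\Phi(z) - a_j > c_{2\ell} - a_j > 0$ by~\eqref{eqn:inequality_cj_aj}. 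Denoting the left-hand side of~\eqref{eqn:Phi_complex} by $G$, this shows $G$ is analytic on $D$. The right-hand side $F(z) \coloneq \int_{b_{2\ell}}^z R(\zeta)/\sqrt{H(\zeta)}\,\dd\zeta$ is also analytic on $D$: the integrand is analytic in $D$ with an integrable $1/\sqrt{\zeta - b_{2\ell}}$ singularity at the starting point, and $D$ is simply connected, so the integral is path-independent.

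Next, I would take real parts. From~\eqref{eqn:gL}, $\re G(z) = g_L(\Phi(z))$; from~\eqref{eqn:gE_l_intervals}, $\re F(z) = g_E(z)$; and by~\eqref{eqn:gE_gL} these coincide. Therefore $F - G$ is a purely imaginary constant on $D$. Restricting to $z \in I_\ell$, Lemma~\ref{lem:sqrtH} gives $\sqrt{H(x)} > 0$ and $R$ is real, so $F(z) \in \R$; likewise $\Phi(z) - a_j > 0$ makes $G(z) \in \R$. A constant that is both real and purely imaginary vanishes, so $F \equiv G$ on $D$, establishing~\eqref{eqn:Phi_complex}.

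Part~\ref{it:Phi_real_z} follows directly: for $z \in I_k$ and $b$ an endpoint of $I_k$, I choose the integration path in~\eqref{eqn:gE_l_intervals} along the real segment from $b$ to $z$ inside $\overline{I_k}$. By Lemma~\ref{lem:sqrtH}, $\sqrt{H(x)}$ is real on $I_k$ (with sign $(-1)^{\ell-k}$), so $R(x)/\sqrt{H(x)}$ is real-valued on the path and the real part in Theorem~\ref{thm:gE_intervals}\,\ref{it:green_L_intervals} is redundant. Combining the resulting real identity $g_E(z) = \int_b^z R(x)/\sqrt{H(x)}\,\dd x$ with $g_E(z) = g_L(\Phi(z)) = \sum_{j=1}^\ell m_j \log\abs{\Phi(z) - a_j} - \log(\capacity(E))$ gives~\eqref{eqn:Phi_real}. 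The main technical point throughout is verifying the principal-branch convention for the logarithms, which hinges on the real-axis symmetry of $\Phi$ together with the ordering~\eqref{eqn:inequality_cj_aj}; once that is in place, the rest is a clean application of harmonicity together with the Green's function identity.
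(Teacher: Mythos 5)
Your proof is correct, but it runs along a genuinely different track than the paper's. The paper proves \ref{it:Phi_complex} by integrating the derivative identity $2\partial_z g_E(z)=\sum_{j=1}^\ell m_j\Phi'(z)/(\Phi(z)-a_j)=R(z)/\sqrt{H(z)}$ along a path from $b_{2\ell}$ to $z$ inside $\C\setminus\oc{-\infty,b_{2\ell}}$, obtaining $\sum_j m_j\log(\Phi(z)-a_j)-\sum_j m_j\log(\Phi(b_{2\ell})-a_j)$, and then evaluates the constant of integration from $g_L(\Phi(b_{2\ell}))=0$ together with $\Phi(b_{2\ell})>a_j$; part \ref{it:Phi_real_z} is done the same way on the real segment, using $\Phi(x)\neq a_j$ on $I_k$. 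You instead never differentiate the left-hand side: you observe that both sides of \eqref{eqn:Phi_complex} are analytic on the slit plane, that their real parts coincide because $\re G=g_L\circ\Phi=g_E=\re F$ by \eqref{eqn:gE_gL} and \eqref{eqn:gE_l_intervals}, hence they differ by a purely imaginary constant, which you kill on the ray $I_\ell$ where both sides are real; and you get \ref{it:Phi_real_z} directly by noting that on a gap the integrand $R/\sqrt{H}$ is already real, so \eqref{eqn:gE_l_intervals} holds without the real part and \eqref{eqn:Phi_real} is just $g_E=g_L\circ\Phi$ written out, with no integration of $\Phi'/(\Phi-a_j)$ at all. What your route buys is an explicit justification of the principal-branch convention: the paper's antiderivative step tacitly requires that $\Phi(z)-a_j$ never meets $\oc{-\infty,0}$ along the image path, and your half-plane-preservation argument (symmetry $\Phi(\conj z)=\conj{\Phi(z)}$ from uniqueness in Theorem~\ref{thm:walsh_map}, injectivity, and the normalization at infinity — ``Schwarz reflection'' is a slightly loose label for this, but the conclusion and the short argument are sound) supplies exactly that; it also makes \ref{it:Phi_real_z} essentially a one-line consequence of Lemma~\ref{lem:sqrtH} and \eqref{eqn:inequality_cj_aj}. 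What the paper's route buys is the byproduct identity \eqref{eqn:Phi_at_endpoint} at the endpoint $\Phi(b_{2\ell})$, which is reused in the remark following the theorem for other endpoints $b\in\{b_{2k},b_{2k+1}\}$; your argument would need a small extension (tracking the constant $\pm\ii\pi(m_{k+1}+\cdots+m_\ell)$) to recover that remark.
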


\begin{proof}
By~\eqref{eqn:gE_gL}, we have $g_E(z) = g_L(\Phi(z))$, where $g_L$ is given 
by~\eqref{eqn:gL}.  By~\eqref{eqn:wirtinger_gE_gL},
\begin{equation} \label{eqn:Green_derivative}
2 \partial_z g_E(z) = \sum_{j=1}^\ell m_j \frac{\Phi'(z)}{\Phi(z) - a_j}, \quad 
z \in \C \setminus E,
\end{equation}
and by Theorem~\ref{thm:gE_intervals}\,\ref{it:green_L_intervals} 
and~\eqref{eqn:wirtinger}, $2 \partial_z g_E(z) = R(z)/\sqrt{H(z)}$.

\ref{it:Phi_complex}
For $z \in \C \setminus \oc{-\infty, b_{2 \ell}}$, 
we integrate~\eqref{eqn:Green_derivative} along a path from $b_{2 \ell}$ to $z$ that lies in the simply 
connected domain $\C \setminus \oc{-\infty, b_{2 \ell}}$ (except for the
initial point $b_{2 \ell}$) and obtain
\begin{align*}
\int_{b_{2 \ell}}^z \frac{R(\zeta)}{\sqrt{H(\zeta)}} \, \dd \zeta
&= \int_{b_{2 \ell}}^z 2 \partial_z g_E(\zeta) \, \dd \zeta
= \sum_{j=1}^\ell m_j \int_{b_{2 \ell}}^z \frac{\Phi'(\zeta)}{\Phi(\zeta) - 
a_j} \, \dd \zeta \\
&= \sum_{j=1}^\ell m_j \log(\Phi(z) - a_j)
- \sum_{j=1}^\ell m_j \log(\Phi(b_{2\ell}) - a_j)
\end{align*}
with the principal branch of the logarithm.
Since $\Phi(b_{2 \ell}) \in \partial L$, we have $g_L(\Phi(b_{2 \ell})) = 0$.
Together with $\Phi(b_{2 \ell}) > a_j$ for $j = 1, \ldots, \ell$ 
and~\eqref{eqn:gL}, this implies
\begin{equation} \label{eqn:Phi_at_endpoint}
\sum_{j=1}^\ell m_j \log(\Phi(b_{2\ell}) - a_j)
= \sum_{j=1}^\ell m_j \log \abs{\Phi(b_{2\ell}) - a_j} = \log(\capacity(E)).
\end{equation}
This completes the proof of~\ref{it:Phi_complex}.

\ref{it:Phi_real_z}
Let $k \in \{ 0, \ldots, \ell \}$.
For $z \in I_k$, we integrate~\eqref{eqn:Green_derivative} from
one endpoint $b \in \{ b_1, \ldots, b_{2k} \}$ of $I_k$ to~$z$ along the real
line.
Similarly to~\ref{it:Phi_complex}, we have
\begin{equation*}
\int_b^z \frac{R(x)}{\sqrt{H(x)}} \, \dd x
= \sum_{j=1}^\ell m_j \int_b^z \frac{\Phi'(x)}{\Phi(x) - a_j} \, \dd x
= \sum_{j=1}^\ell m_j \log \abs{\Phi(z) - a_j} - \log(\capacity(E)).
\end{equation*}
Note that $\Phi(x) \neq a_j$ for all $x \in I_k$,
which follows from~\eqref{eqn:Phi_Ik_Jk} and~\eqref{eqn:inequality_cj_aj}.
This completes the proof of~\ref{it:Phi_real_z}.
\end{proof}

\begin{remark}
If we replace $b_{2 \ell}$ by $b \in \{ b_{2 k}, b_{2 k + 1} \}$ for some
$k \in \{ 0, \ldots, \ell-1 \}$ (where $b_0 \coloneq b_1$)
in Theorem~\ref{thm:Phi_eqn}\,\ref{it:Phi_complex} then we obtain similarly 
to~\eqref{eqn:Phi_at_endpoint}
\begin{equation*}
\sum_{j=1}^\ell m_j \log(\Phi(b) - a_j)
= \log(\capacity(E)) \pm \ii \pi (m_{k+1} + \ldots + m_\ell),
\end{equation*}
since then $\Phi(b) < a_j$ for $j = k+1, \ldots, \ell$ and $\Phi(b) > a_j$ for 
$j = 1, \ldots, k$.  Therefore, for $z \in \C \setminus \R$, $\Phi(z)$ satisfies
\begin{equation*}
\sum_{j=1}^\ell m_j \log(\Phi(z) - a_j) - \log(\capacity(E)) \mp \ii \pi 
(m_{k+1} + \ldots + m_\ell)
= \int_b^z \frac{R(\zeta)}{\sqrt{H(\zeta)}} \, \dd \zeta,
\end{equation*}
where the sign of ``$\mp$'' is ``$-$'' if $\im(z) > 0$ and ``$+$'' if $\im(z) < 
0$.
This shows that, for $b \in \{ b_{2 k}, b_{2 k + 1} \}$, the complex Green's 
function of $\comp{E}$ satisfies
\begin{equation}
\int_b^z \frac{R(\zeta)}{\sqrt{H(\zeta)}} \, \dd \zeta
= \int_{b_{2 \ell}}^z \frac{R(\zeta)}{\sqrt{H(\zeta)}} \, \dd \zeta \mp \ii \pi 
(m_{k+1} + \ldots + m_\ell), \quad z \in \C \setminus \R.
\end{equation}
This shows that for the real Green's function of $\comp{E}$, see
Theorem~\ref{thm:gE_intervals}\,\ref{it:green_L_intervals},
the point $b$ can indeed be any of the endpoints $\{ b_1, \ldots, b_{2 \ell} \}$.
\end{remark}

Equations~\eqref{eqn:Phi_complex} and~\eqref{eqn:Phi_real} in
Theorem~\ref{thm:Phi_eqn} can be solved numerically in order to obtain 
the value $\Phi(z)$.
More precisely, if $z \in \C \setminus \R$, we solve
\begin{equation} \label{eqn:Phi_eqn_complex}
F_1(w) \coloneq \sum_{j=1}^\ell m_j \log(w - a_j) - \log(\capacity(E))
= \int_{b_{2\ell}}^z \frac{R(\zeta)}{\sqrt{H(\zeta)}} \, \dd \zeta,
\quad w \in \C \setminus \R,
\end{equation}
with the principal branch of the logarithm, and, if $z \in I_k$, $k = 0, 
\ldots, \ell$, we solve
\begin{equation} \label{eqn:Phi_eqn_real}
F_2(w) \coloneq \sum_{j=1}^\ell m_j \log \abs{w - a_j} - \log(\capacity(E))
= \int_b^z \frac{R(x)}{\sqrt{H(x)}} \, \dd x, \quad w \in \R \setminus L,
\end{equation}
with real logarithms, and where $b \in \{ b_1, \ldots, b_{2 \ell} \}$ is an 
endpoint of $I_k$.  
By Theorem~\ref{thm:Phi_eqn}, $w = \Phi(z)$ is one solution, and we discuss its 
uniqueness next.

\begin{theorem} \label{thm:Phi_eqn_unique_sol}
Let $E$ be as in~\eqref{eqn:E_l_intervals} and $I_0, \ldots, I_\ell$ and $J_0, \ldots, J_\ell$ be as in~\eqref{eqn:gaps} and~\eqref{eqn:intervals_Jk}, respectively, then the following holds.
\begin{enumerate}
\item \label{it:Phi_eqn_unique_complex}
For $z \in \C \setminus \R$, equation~\eqref{eqn:Phi_eqn_complex} has the 
unique solution $w = \Phi(z)$.

\item \label{it:Phi_eqn_unique_real}
For $z \in \R \setminus E$, equation~\eqref{eqn:Phi_eqn_real} has the unique 
solution $w = \Phi(z)$ for $w$ restricted to a suitable interval depending on $z$:
\begin{enumerate}
\renewcommand{\labelenumii}{\textup{(\alph{enumii})}}
\item If $z \in I_0$ then $w \in J_0$.
\item If $z \in I_\ell$ then $w \in J_\ell$.
\item If $z \in I_k$, $k = 1, \ldots, \ell-1$, then
\begin{itemize}
\item if $z \in \oo{b_{2k}, z_k}$ then $w \in \oo{c_{2k}, w_k}$,
\item if $z = z_k$ then $w = w_k$,
\item if $z \in \oo{z_k, b_{2k+1}}$ then $w \in \oo{w_k, c_{2k+1}}$,
\end{itemize}
where $z_1, \ldots, z_{\ell-1}$ are the critical points of $g_E$,  compare 
Corollary~\ref{cor:intervals}\,\ref{it:crit_pts_green}, and 
$w_1, \ldots, w_{\ell-1}$ are the critical points of $g_L$, 
compare~\eqref{eqn:interlacing_aj_wj}.
\end{enumerate}
\end{enumerate}
\end{theorem}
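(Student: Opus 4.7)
Both parts assert that $w=\Phi(z)$ is the unique solution, and existence is immediate from Theorem~\ref{thm:Phi_eqn}, so the task is uniqueness.

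For part~(ii), the equation~\eqref{eqn:Phi_eqn_real} simplifies to $g_L(w)=g_E(z)$, because $F_2(w)=g_L(w)$ by~\eqref{eqn:gL} and the right-hand side equals $g_E(z)$ by~\eqref{eqn:gE_l_intervals}. I would analyse $g_L$ on each component of $\R\setminus L$: on $J_0$ and $J_\ell$, the derivative $g_L'(w)=\sum_j m_j/(w-a_j)$ has constant sign since the interlacing~\eqref{eqn:interlacing_aj_wj} confines all its zeros to $[a_1,a_\ell]$, so $g_L$ is strictly monotonic; on each bounded gap $J_k$, $g_L$ vanishes at both endpoints, has a unique interior critical point $w_k$ by~\eqref{eqn:interlacing_aj_wj}, and attains its maximum $g_L(w_k)=g_E(z_k)$ there (by~\eqref{eqn:gL_gE_at_critical_pts}), so it is strictly monotonic on each of $(c_{2k},w_k)$ and $(w_k,c_{2k+1})$. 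The same analysis, applied to $g_E$ on $I_k$ using Corollary~\ref{cor:intervals}\,\ref{it:crit_pts_green}, yields the parallel unimodal structure with maximum at $z_k$. Since $\Phi$ is strictly increasing on $\R\setminus E$ and sends $I_k$ onto $J_k$ with $\Phi(z_k)=w_k$, the sub-interval restrictions in~(a), (b), and the three sub-cases of~(c) put $g_L$ and $g_E$ on corresponding strictly monotone branches, and uniqueness follows.

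For part~(i), let $w\in\C\setminus\R$ satisfy~\eqref{eqn:Phi_eqn_complex}. Taking real parts gives $g_L(w)=g_E(z)>0$, so $w\in\comp{L}$. The symmetry $E^*=E$, $L^*=L$ together with the uniqueness in Theorem~\ref{thm:walsh_map} forces $\Phi(\conj{z})=\overline{\Phi(z)}$, so $\Phi$ sends real points to real points; hence $z':=\Phi^{-1}(w)\in\C\setminus\R$. Since $\C\setminus\R\subseteq\C\setminus\oc{-\infty,b_{2\ell}}$, Theorem~\ref{thm:Phi_eqn}\,\ref{it:Phi_complex} applies and yields $F_E(z')=F_1(w)=F_E(z)$. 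Uniqueness therefore reduces to the \emph{injectivity of the complex Green's function $F_E$ on $\C\setminus\R$}.

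The heart of the proof is injectivity of $F_E$ on the upper half plane $\cH^+:=\{\im z>0\}$; the lower half plane follows by conjugation, and the two half-plane images are disjoint provided $\im F_E>0$ on $\cH^+$ (established below). Since $F_E'(z)=R(z)/\sqrt{H(z)}\neq0$ on $\cH^+$ (the zeros of $R$ are real by Corollary~\ref{cor:intervals}\,\ref{it:crit_pts_green}), $F_E$ is locally injective there, and $F_E(z)=\log z-\log\capacity(E)+\cO(1/z)$ at infinity by~\eqref{eqn:gE_asymtptotic}. I would then compute the boundary values of $F_E$ along $\R$ using Lemma~\ref{lem:sqrtH} for the branch of $\sqrt{H}$, Corollary~\ref{cor:intervals}\,\ref{it:crit_pts_green} for the sign of $R$, and Theorem~\ref{thm:mj_muE_intervals} for the mass integrals. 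Traversing $\R$ leftward from $+\infty$, the image is: the positive real ray (from $I_\ell$); a vertical segment of length $\pi m_\ell$ on the imaginary axis (from $E_\ell$); a horizontal out-and-back slit at height $\pi m_\ell$ of length $g_E(z_{\ell-1})$ (from $I_{\ell-1}$); another vertical rise of $\pi m_{\ell-1}$ (from $E_{\ell-1}$); and so on, terminating at height $\pi\sum_j m_j=\pi$. This exhibits $F_E|_{\cH^+}$ as a conformal bijection onto the half-strip $\{u+\ii v:u>0,\ 0<v<\pi\}$ with $\ell-1$ horizontal slits at heights $\pi(m_{k+1}+\cdots+m_\ell)$ of lengths $g_E(z_k)$ for $k=1,\ldots,\ell-1$, from which global injectivity and $\im F_E>0$ follow by the argument principle. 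The main obstacle is precisely this boundary computation: bookkeeping the signs of $\sqrt{H}$ and $R$ on every one of the $2\ell+1$ pieces of $\R$ and verifying that the image curve is simple, so that $F_E$ is indeed a conformal bijection onto the claimed slit half-strip.
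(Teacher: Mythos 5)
Your proposal is correct, but only part (ii) follows the paper's own route; part (i) is genuinely different. For (ii) you argue exactly as the paper does: reduce \eqref{eqn:Phi_eqn_real} to the piecewise strict monotonicity of $F_2=g_L$ on the gaps $J_k$ (zero boundary values at $c_{2k},c_{2k+1}$, unique interior critical point $w_k$ by \eqref{eqn:interlacing_aj_wj}), combined with $\Phi$ increasing, $\Phi(I_k)=J_k$ and $\Phi(z_k)=w_k$. For (i) the paper stays entirely in the $w$-plane: it proves that $F_1=g_L+\ii h_L$ is univalent on $\C\setminus\R$ by showing, via boundary correspondence, that it maps $\bH$ conformally onto the full strip $\{0<\im\zeta<\pi\}$ slit along $\oc{-\infty,g_L(w_k)}+\ii\pi(m_{k+1}+\cdots+m_\ell)$; uniqueness is then immediate because the unknown $w$ sits inside $F_1$. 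You instead pull the equation back to the $z$-plane: the real part of \eqref{eqn:Phi_eqn_complex} gives $w\in\comp{L}$, Schwarz symmetry $\Phi(\conj{z})=\conj{\Phi(z)}$ (legitimate, since $E^*=E$ and $L$, $\Phi$ are unique) gives $z'=\Phi^{-1}(w)\in\C\setminus\R$, Theorem~\ref{thm:Phi_eqn}\,\ref{it:Phi_complex} at $z'$ converts the problem into injectivity of the complex Green's function of $\comp{E}$ on $\C\setminus\R$, which you establish by the analogous boundary-correspondence argument, now onto a half-strip with \emph{finite} slits of lengths $g_E(z_k)$ at heights $\pi(m_{k+1}+\cdots+m_\ell)$. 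Your slit data are correct, and the two pictures are consistent: since $F_1\circ\Phi$ is the $z$-plane Green's map and $g_L(w_k)=g_E(z_k)$, your slit half-strip is precisely the part $\{\re\zeta>0\}$ of the paper's slit strip. The trade-off: the paper's version avoids the pullback and the symmetry step, and its boundary values are elementary (the argument of $w-a_j$ is piecewise constant on $\R$, and $g_L$ is an explicit sum of logarithms), whereas your version requires the sign bookkeeping of $R/\sqrt{H}$ on all $2\ell+1$ pieces of $\R$ (which you rightly flag as the main labour, and which is essentially Lemma~\ref{lem:sqrtH} plus \eqref{eqn:sign_R}); in return, your image domain is expressed directly in terms of the data of $E$ ($m_j$ and $g_E(z_k)$) without reference to $L$. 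Both arguments leave the same final step — boundary bijectivity plus the argument principle, with the behaviour at $\infty$ controlled by the $\log$-asymptotics — at a comparable level of detail, so your sketch is on par with the paper's proof.
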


\begin{proof}
\ref{it:Phi_eqn_unique_complex}
In order to show that~\eqref{eqn:Phi_eqn_complex} has a unique solution $w$,
we consider the mapping properties of the function $F_1$ in the 
upper half-plane $\bH = \{ w \in \C : \im(w) > 0 \}$.
Since $F_1'(w) = \sum_{j=1}^\ell m_j / (w-a_j)$, the critical points of $F_1$ are the critical points $w_1, \ldots, w_{\ell-1}$ of $g_L$, compare~\eqref{eqn:gL_wirtinger}, which are real by~\eqref{eqn:interlacing_aj_wj}, hence $F_1$ is conformal in $\bH$.
The function $F_1$ can be written as
\begin{equation*}
F_1(w) = \sum_{j=1}^\ell m_j \log \abs{w - a_j} - \log(\capacity(E))
+ \ii \sum_{j=1}^\ell m_j \arg(w - a_j) = g_L(w) + \ii h_L(w),
\end{equation*}
with the principal branch of the argument.
Thus $\arg(w - a_j) \in \oo{0, \pi}$ for $\im(w) > 0$ and $F_1$ maps $\bH$
into the strip $\{ \zeta \in \C : 0 < \im(\zeta) < \pi \}$.
Let us consider the behaviour of $F_1$ on the real line.  
The function $h_L$ has the values
\begin{equation*}
h_L(w) =
\begin{cases}
0, & w > a_\ell, \\
(m_{k+1} + \ldots + m_\ell) \pi, & a_k < w < a_{k+1}, \\
\pi, & w < a_1,
\end{cases}
\end{equation*}
and $g_L$ satisfies $\lim_{w \to \pm \infty} g_L(w) = + \infty$ and
$\lim_{w \to a_j} g_L(w) = - \infty$.
Moreover, $g_L$ has the critical points $w_1, \ldots, w_{\ell-1}$ which
satisfy~\eqref{eqn:interlacing_aj_wj} and
$g_L(w_k) > 0$ for $k = 1, \ldots, \ell-1$.
Therefore,
\begin{align*}
F_1(\oo{a_\ell, +\infty}) &= \oo{-\infty, + \infty}, \\
F_1(\oc{a_k, w_k}) &= F_1(\co{w_k, a_{k+1}})
= \oc{-\infty, g_L(w_k)} + \ii (m_{k+1} + \ldots + m_\ell) \pi, \\
F_1(\oo{-\infty, a_1}) &= \oo{-\infty, +\infty} + \ii \pi.
\end{align*}
This shows that $F_1$ maps the upper half-plane $\bH$ conformally into the strip 
with slits
\begin{equation*}
S = \{ \zeta \in \C : 0 < \im(\zeta) < \pi \} \setminus 
\bigcup_{k=1}^{\ell-1} \Bigl( \oc{-\infty, g_L(w_k)} + \ii (m_{k+1} + \ldots + 
m_\ell) \pi \Bigr).
\end{equation*}
By distinguishing the two sides of the slits, $F_1$ maps $\partial \bH$ bijectively onto the boundary of $S$ and thus is univalent in $\bH$.
Since $F_1(w) = \conj{F_1(\conj{w})}$ for all $w \in \C \setminus \R$, the 
function $F_1$ is univalent in $\C \setminus \R$.
Together with Theorem~\ref{thm:Phi_eqn}, this shows that $w = \Phi(z)$
is the unique solution of~\eqref{eqn:Phi_eqn_complex}.

\begin{figure}
{\centering
\includegraphics[width=0.48\linewidth]{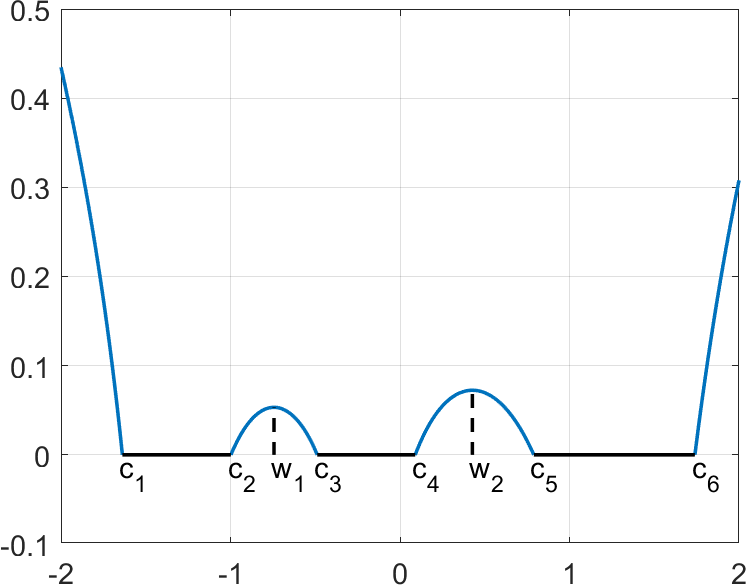}

}
\caption{Illustration of the proof of 
Theorem~\ref{thm:Phi_eqn_unique_sol}\,\ref{it:Phi_eqn_unique_real}:  The graph of the function $F_2(w) = g_L(w)$ for $w \in \R \setminus L$.}
\label{fig:gL_real}
\end{figure}

\ref{it:Phi_eqn_unique_real}
The proof relies on the piece-wise monotonicity of $F_2$ in $\R \setminus L$.
Fix $k \in \{ 0, \ldots, \ell \}$ and $z \in I_k$.  Then $\Phi(z) \in J_k$, 
see~\eqref{eqn:Phi_Ik_Jk}, and we solve~\eqref{eqn:Phi_eqn_real} for $w \in J_k$.
The function $F_2 : J_k \to \R$ is real analytic with derivative
\begin{equation*}
F_2'(w) = \sum_{j=1}^\ell \frac{m_j}{w - a_j}.
\end{equation*}
For $k = 1, \ldots, \ell-1$, the function $F_2$ satisfies
$F_2(w) > 0$ for $w \in J_k = \oo{c_{2k}, c_{2k+1}}$ and $F_2(c_{2k}) = 
F_2(c_{2k+1}) = 0$, and $F_2$ has a unique critical point $w_k$ with $c_{2k} < 
w_k < c_{2k+1}$.
Therefore, $F_2$ is strictly increasing on $\cc{c_{2k}, w_k}$ and strictly 
decreasing on $\cc{w_k, c_{2k+1}}$;
see Figure~\ref{fig:gL_real} for an illustration.
Since $\Phi(z_k) = w_k$, where $z_k$ is the critical point of $g_E$ in $I_k$,
and taking into account the mapping properties of $\Phi$ 
in~\cite[Thm.~2.1\,(v)]{SchiefermayrSete-II}, this leads to the following 
statement.
If $b_{2k} < z < z_k$ then~\eqref{eqn:Phi_eqn_real} has the unique solution $w 
= \Phi(z)$ with $c_{2k} < w < w_k$.
If $z_k < z < b_{2k+1}$ then~\eqref{eqn:Phi_eqn_real} has the unique solution 
$w = \Phi(z)$ with $w_k < w < c_{2k+1}$.
Since $F_2'(w) < 0$ for $w \in J_0$ and $F_2'(w) > 0$ for $w \in J_\ell$, 
in both cases, equation~\eqref{eqn:Phi_eqn_real} has the unique solution $w = 
\Phi(z)$.
This completes the proof of~\ref{it:Phi_eqn_unique_real}.
\end{proof}

\begin{remark}
The points $c_1, \ldots, c_{2 \ell}$ in~\eqref{eqn:intervals_Jk}
are the real zeros of the Green's function~$g_L$ in~\eqref{eqn:gL} and can
be computed, e.g., with bisection or Newton's method.
\end{remark}

Next, we discuss the numerical solution of
equations~\eqref{eqn:Phi_eqn_complex} and~\eqref{eqn:Phi_eqn_real}.
The derivatives of both functions $F_1$ and $F_2$ are
$\sum_{j=1}^\ell m_j/(w - a_j)$.
We solve~\eqref{eqn:Phi_eqn_complex} and~\eqref{eqn:Phi_eqn_real} with the 
damped Newton iteration
\begin{equation}
w^{[k+1]} = w^{[k]} - d \frac{F_i(w^{[k]}) - \int_b^z 
\frac{R(\zeta)}{\sqrt{H(\zeta)}} \, \dd \zeta}{\sum_{j=1}^\ell 
\frac{m_j}{w^{[k]} - a_j}},
\quad k = 0, 1, 2, \ldots,
\end{equation}
where we use the damping parameter $d = 1, 2^{-1}, 2^{-2}, \ldots, 2^{-10}$.
In view of Theorem~\ref{thm:Phi_eqn_unique_sol}, we choose the initial point
$w^{[0]}$ for the individual cases as follows.
For given $z \in \C \setminus \R$, we solve~\eqref{eqn:Phi_eqn_complex}
with $w^{[0]} = z$.
For $z \in \R \setminus E$, we solve~\eqref{eqn:Phi_eqn_real} with
\begin{equation*}
w^{[0]} =
\begin{cases}
z & \text{if } z \in I_0 \text{ or } z \in I_\ell, \\
c_{2k} + \frac{z-b_{2k}}{z_k-b_{2k}} (w_k-c_{2k}) & \text{if } z \in \oo{b_{2k}, z_k}, \\
w_k + \frac{z-z_k}{b_{2k+1}-z_k} (c_{2k+1}-w_k) & \text{if } z \in \oo{z_k, b_{2k+1}}.
\end{cases}
\end{equation*}
Note that for $z \in \oo{b_{2k}, z_k}$ or $z \in \oo{z_k, b_{2k+1}}$ one could 
also use only bisection to solve~\eqref{eqn:Phi_eqn_real}.  However, the Newton iteration typically converges faster than bisection.

\begin{example} \label{ex:two_intervals_continued}
Let $E = \cc{-1, -0.3} \cup \cc{0.1, 1}$ be as in 
Example~\ref{ex:two_intervals},
where we already computed all parameters of the set $L$.
The values $\Phi(z)$ of the conformal map for $z \in \C \setminus E$ are 
obtained as described above.
Figure~\ref{fig:two_intervals} shows a grid and its image under $\Phi$ as
well as the set $E$ and the corresponding set $L$.
\end{example}

\begin{figure}
{\centering
\includegraphics[width=0.48\linewidth]{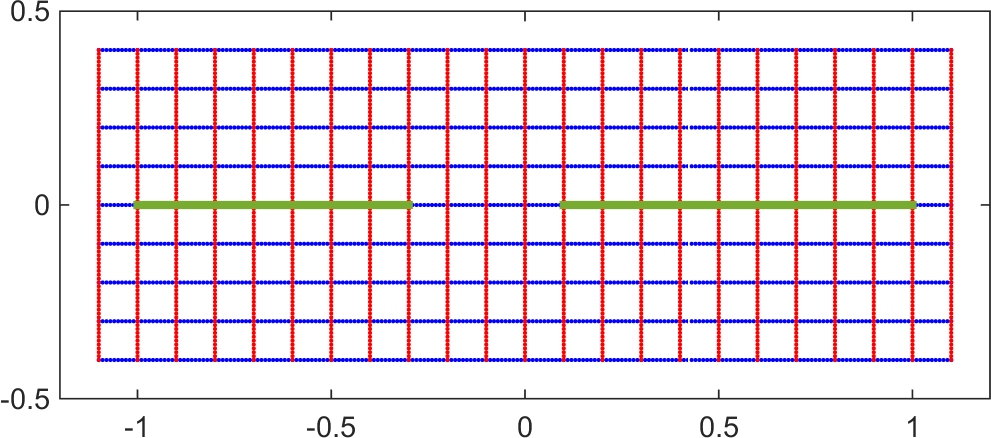}
\includegraphics[width=0.48\linewidth]{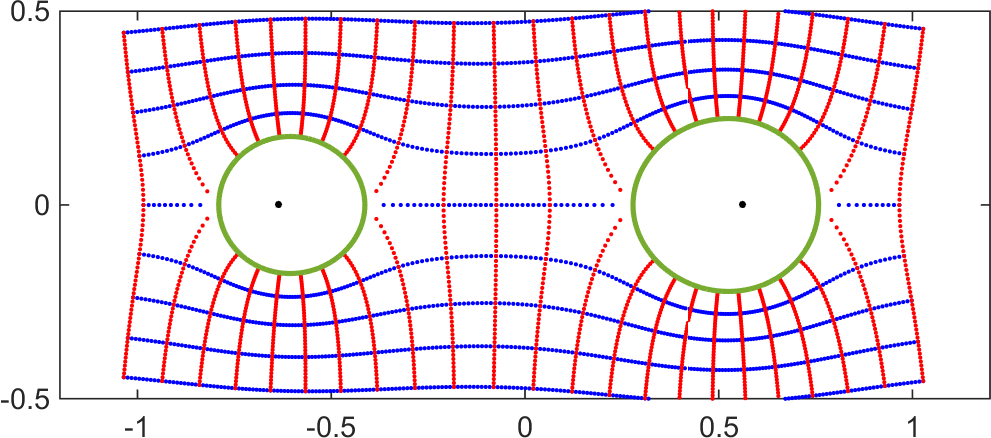}

}
\caption{Left: Set $E = \cc{-1, -0.3} \cup \cc{0.1, 1}$ in 
Example~\ref{ex:two_intervals_continued} with a grid.
Right: $\partial L$ (green curves), $a_1, a_2$ (black dots) and the image of 
the grid under $\Phi$.}
\label{fig:two_intervals}
\end{figure}

\begin{example} \label{ex:three_intervals}
Let $E = \cc{-2, -0.9} \cup \cc{-0.7, 0.2} \cup \cc{0.5, 2.2}$, i.e., $\ell = 3$.
Let us compute the parameters of the corresponding lemniscatic domain~$\comp{L}$.
First, we compute the coefficients of the polynomial~$R$ with the help of~\eqref{eqn:coeff_R}.
Using Theorem~\ref{thm:mj_muE_intervals}, we obtain the exponents 
$m_1 = 0.3601$, $m_2 = 0.1772$, $m_3 = 0.4627$ (all values rounded to four decimal places).
The capacity $\capacity(E) = 1.0458$ is derived from Corollary~\ref{cor:intervals}\,\ref{it:cap_L_intervals}.

Next, we calculate $a_1, a_2, a_3$ in two ways:
first, by solving the non-linear system of equations~\eqref{eqn:sys_a1a2a3},
and second, using Algorithm~\ref{algo:aj}.  In both variants, we obtain
$a_1 = -1.4101$, $a_2 = -0.1950$, $a_3 = 1.3896$,
where the values agree up to seven digits.
Algorithm~\ref{algo:aj} converges in $4$~steps.
The set $L$ is shown in Figure~\ref{fig:three_intervals} (right), which also 
shows the values of $\Phi$ on a grid.
\end{example}

\begin{figure}
{\centering
\includegraphics[width=0.48\linewidth]{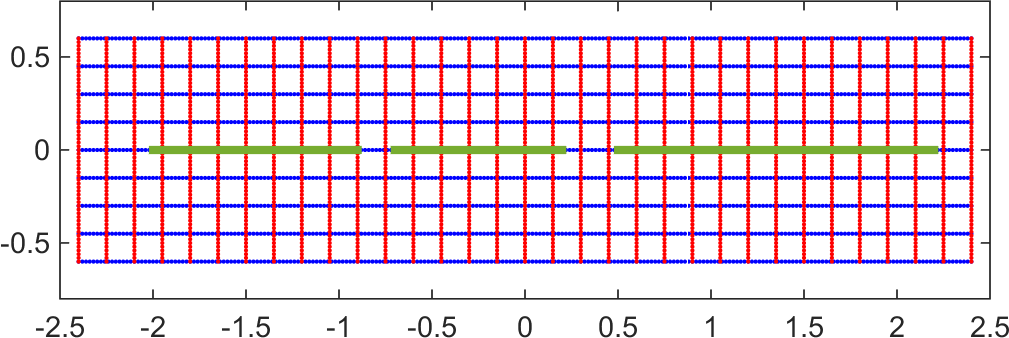}
\includegraphics[width=0.48\linewidth]{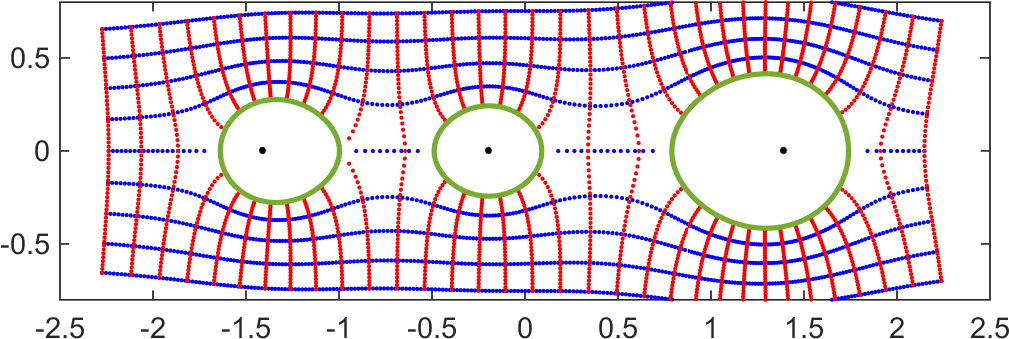}

}
\caption{Left: Set $E = \cc{-2, -0.9} \cup \cc{-0.7, 0.2} \cup \cc{0.5, 2.2}$ 
in Example~\ref{ex:three_intervals} with a grid.
Right: $\partial L$ (green curves), $a_1, a_2, a_3$ (black dots) and the image 
of the grid under $\Phi$.}
\label{fig:three_intervals}
\end{figure}

\begin{figure}[t!]
{\centering
\includegraphics[width=0.98\linewidth]{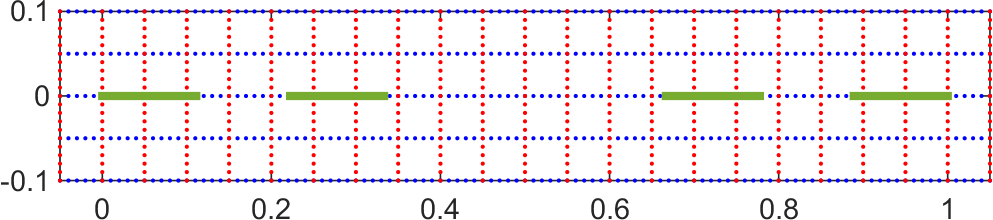}

\includegraphics[width=0.98\linewidth]{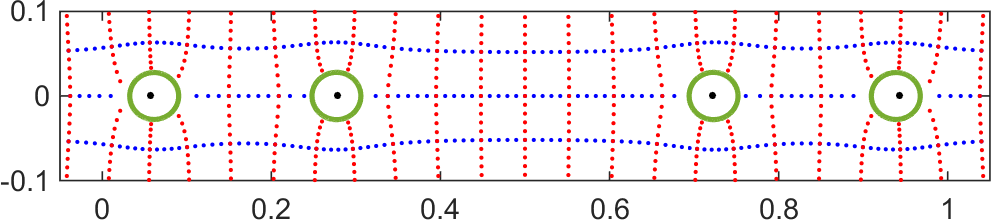}

\includegraphics[width=0.98\linewidth]{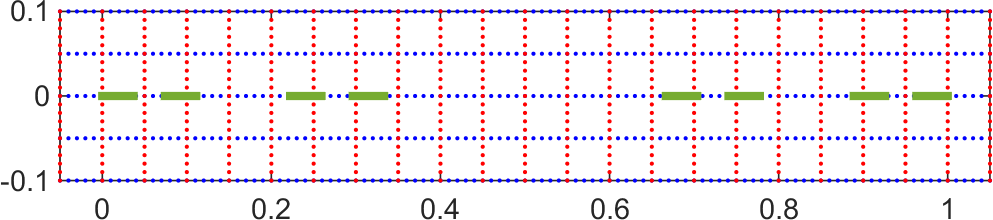}

\includegraphics[width=0.98\linewidth]{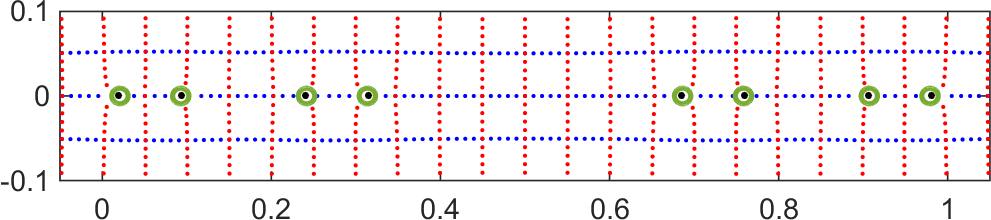}

}
\caption{Panels 1 and 3:
Sets $E_2$ and $E_3$ from the construction of the Cantor middle third 
set (see Example~\ref{ex:cantor}) with a grid.
Panels 2 and 4: Corresponding sets $L$ (with $\partial L$ in green), centers 
$a_j$ (black dots), and image of the grid under $\Phi$.}
\label{fig:cantor}
\end{figure}

\begin{example} \label{ex:cantor}
The classical \emph{Cantor middle third set} is defined by
\begin{equation*}
E = \bigcap_{k=0}^\infty E_k, \quad \text{where }
E_0 = \cc{0, 1}, \quad
E_{k+1} = \frac{1}{3} E_k \cup \left( \frac{2}{3} + \frac{1}{3} E_k \right), 
\quad k \geq 0.
\end{equation*}
We consider the two sets $E_2$ and $E_3$ which consist of $4$ and $8$ 
intervals, respectively.  The logarithmic capacities, numerically computed 
with~\eqref{eqn:cap_L_intervals_1}, are
\begin{equation*}
\capacity(E_2) = 0.228430704425168 \quad \text{and} \quad
\capacity(E_3) = 0.224752818755217
\end{equation*}
and agree in the first 12 digits with the values found 
in~\cite[Ex.~4.13]{LiesenSeteNasser2017}.
We compute the centers with Algorithm~\ref{algo:aj}, which numerically
converges in $2$ steps for $E_2$ and in $3$ steps for $E_3$.
Figure~\ref{fig:cantor} shows the sets $E_2, E_3$, the corresponding sets $L$ 
and the conformal map $\Phi$, evaluated numerically on a grid.
\end{example}

\begin{remark}
In all previous examples of sets $E = E_1 \cup \ldots \cup E_\ell$ consisting 
of $\ell$ real intervals, the centers $a_j$ were located in the intervals $E_j$.
This is not true in general, as the example $E = \cc{-1, 1} \cup \cc{1.2, 1.4}$
shows, where the computed centers of $L$ are
$a_1 = -0.0677\ldots \in E_1$, $a_2 = 1.0862\ldots \notin E_2$.
\end{remark}

\appendix
\section{Proofs}
\label{sect:appendix}

\begin{proof}[Proof of Corollary~\ref{cor:sum_mj_aj_poly_preimage}]
Let $\alpha$ be the coefficient of $z^{-2}$ in the Laurent series of $\partial_z g_E$ as in Lemma~\ref{lem:expansion_gE}.
We prove that $\alpha = - p_{n-1} / (n p_n)$.
Let $\cR : \comp{\Omega} \to \{ w \in \widehat{\C} : \abs{w} > 1 \}$ with 
$\cR(\infty) = \infty$ and $\cR'(\infty) > 0$
be the exterior Riemann map of $\Omega$.  Its Laurent series at infinity has
the form $\cR(z) = d_1 z + d_0 + \cO(z^{-1})$.
By~\cite[Eq.~(3.7)]{SchiefermayrSete2023},
\begin{equation*}
2 \partial_z g_E(z)
= \frac{1}{n} \frac{(\cR \circ P)'(z)}{(\cR \circ P)(z)}.
\end{equation*}
Since $n \geq 2$, we have
\begin{align*}
(\cR \circ P)(z) &= d_1 (p_n z^n + p_{n-1} z^{n-1} + \cO(z^{n-2})), \\
(\cR \circ P)'(z) &= d_1 (n p_n z^{n-1} + (n-1) p_{n-1} z^{n-2} + 
\cO(z^{n-3})),
\end{align*}
and hence
\begin{align*}
2 \partial_z g_E(z)
&= \frac{1}{n} \frac{n p_n z^{n-1} + (n-1) p_{n-1} z^{n-2} + \cO(z^{n-3})}{p_n 
z^n + p_{n-1} z^{n-1} + \cO(z^{n-2})} \\
&= \frac{z^{-1} + \frac{n-1}{n} \frac{p_{n-1}}{p_n} z^{-2} + \cO(z^{-3})}{1 + 
\frac{p_{n-1}}{p_n} z^{-1} + \cO(z^{-2})}.
\end{align*}
Using $1/(1 + z) = 1 - z + \cO(z^2)$ for $\abs{z} < 1$, we obtain
\begin{align*}
2 \partial_z g_E(z)
&= \Bigl( z^{-1} + \frac{n-1}{n} \frac{p_{n-1}}{p_n} z^{-2} + \cO(z^{-3}) \Bigr)
\Bigl( 1 -  \frac{p_{n-1}}{p_n} z^{-1} + \cO(z^{-2}) \Bigr) \\
&= z^{-1} - \frac{p_{n-1}}{n p_n} z^{-2} + \cO(z^{-3})
\end{align*}
and~\eqref{eqn:sum_mj_aj_poly_preimage} follows by applying Theorem~\ref{thm:sum_mj_aj}.
\end{proof}

\begin{proof}[Proof of Theorem~\ref{thm:mj_muE_intervals}]
We use the integral representation~\eqref{eqn:mj_with_integral}.
The Green's function $g_E$ is given by~\eqref{eqn:gE_l_intervals}, hence
the Wirtinger derivative of $g_E$ is
\begin{equation*}
2 \partial_z g_E(z) = \frac{R(z)}{\sqrt{H(z)}}
\end{equation*}
by~\eqref{eqn:wirtinger}, and therefore
\begin{equation*}
m_j = \frac{1}{2 \pi \ii} \int_{\gamma_j} \frac{R(z)}{\sqrt{H(z)}} \, \dd z,
\quad j = 1, \ldots, \ell,
\end{equation*}
where $\gamma_j$ is a (smooth) closed curve in $\C \setminus E$ with 
$\wind(\gamma_j; z) = \delta_{jk}$ for $z \in E_k$.
Since $R(z)/\sqrt{H(z)}$ is analytic in $\C \setminus E$ with branch cuts along 
the intervals $E_j$ (compare Lemma~\ref{lem:sqrtH}) and has singularities at the endpoints of the intervals, 
i.e., at $b_1, \ldots, b_{2n}$, we can deform $\gamma_j$ as follows without 
changing the value of the integral:
\begin{align*}
m_j &= \frac{1}{2 \pi \ii} \int_{b_{2j-1} + \eps}^{b_{2j} - \eps} 
\frac{R(x)}{\sqrt{H(x - 0 \ii)}} \, \dd x
- \frac{1}{2 \pi \ii} \int_{b_{2j-1} + \eps}^{b_{2j} - \eps} 
\frac{R(x)}{\sqrt{H(x + 0 \ii)}} \, \dd x \\
&\phantom{=} + \int_{C_1} \frac{R(z)}{\sqrt{H(z)}} \, \dd z
+ \int_{C_2} \frac{R(z)}{\sqrt{H(z)}} \, \dd z,
\end{align*}
where $\sqrt{H(x \pm 0 \ii)} = \lim_{y \searrow 0} \sqrt{H(x \pm \ii y)}$ 
denote the values of $\sqrt{H(z)}$ on the two rims of the branch cut 
$\cc{b_{2j-1}, b_{2j}}$, and where $C_1$, $C_2$ are circular arcs parametrized 
by $b_{2j-1} + \eps \ee^{\ii t}$, $0 \leq t \leq 2 \pi$, and $b_{2j} + \eps 
\ee^{\ii t}$, $-\pi \leq t \leq \pi$, respectively.
The integrals over the circular arcs vanish for $\eps \to 0$, since
\begin{equation*}
\abs*{\int_{C_k} \frac{R(z)}{\sqrt{H(z)}} \, \dd z} \leq L(C_k) \max_{z \in 
C_k} \frac{\abs{R(z)}}{\sqrt{\abs{H(z)}}} \leq 2 \pi \eps \frac{M}{\sqrt{\eps}},
\end{equation*}
which yields
\begin{equation*}
m_j = \frac{1}{2 \pi \ii} \int_{b_{2j-1}}^{b_{2j}} 
\Biggl( \frac{R(x)}{\sqrt{H(x - 0 \ii)}} - 
\frac{R(x)}{\sqrt{H(x + 0 \ii)}} \Biggr) \, \dd x.
\end{equation*}
By Lemma~\ref{lem:sqrtH} and together with~\eqref{eqn:sign_R}, we obtain for 
$x \in \oo{b_{2j-1}, b_{2j}}$, $j = 1, \ldots, \ell$,
\begin{equation*}
\frac{R(x)}{\sqrt{H(x - 0 \ii)}} - \frac{R(x)}{\sqrt{H(x + 0 \ii)}}
= \frac{(-1)^{\ell-j} \abs{R(x)}}{\sqrt{\abs{H(x)}}} \left( \frac{1}{-\ii 
(-1)^{\ell-j}} - \frac{1}{\ii (-1)^{\ell-j}} \right)
= 2 \ii \frac{\abs{R(x)}}{\sqrt{\abs{H(x)}}},
\end{equation*}
which is also stated in~\cite[Proof of Lem.~2.2\,(a)]{Peherstorfer1990}. 
This shows
\begin{equation*}
m_j = \frac{1}{\pi} \int_{b_{2j-1}}^{b_{2j}} 
\frac{\abs{R(x)}}{\sqrt{\abs{H(x)}}} \, \dd x
\end{equation*}
and the assertion now follows together 
with~\eqref{eqn:equilibrium_measure_one_interval}.
\end{proof}

\bibliographystyle{siam}
\bibliography{walshmap.bib}

\end{document}